\theoremstyle{plain}
\newtheorem{Theorem}{Theorem}[section]
\newtheorem{Lemma}[Theorem]{Lemma}
\newtheorem{Proposition}[Theorem]{Proposition}
\newtheorem{Conjecture}[Theorem]{Conjecture}
\newtheorem{theorem}[Theorem]{Theorem}
\theoremstyle{definition}
\newtheorem{Definition}[Theorem]{Definition}
\newtheorem{Example}[Theorem]{Example}
\theoremstyle{remark}
\newtheorem{Remark}[Theorem]{Remark}
\newtheorem{Claim}{Claim}
\newcommand{\N}{\mathbb{N}}     
\newcommand{\R}{\mathbb{R}}     
\newcommand{\Z}{\mathbb{Z}}         
\newcommand{\calC}{\mathscr{C}}
\newcommand{\calD}{\mathscr{D}}
\newcommand{\calE}{\mathscr{E}}
\newcommand{\calF}{\mathscr{F}}
\newcommand{\calG}{\mathscr{G}}
\newcommand{\calH}{\mathscr{H}}
\newcommand{\calL}{\mathscr{L}}
\newcommand{\calQ}{\mathscr{Q}}
\DeclareMathOperator{\diam}{diam}			
\DeclareMathOperator{\dist}{dist}			
\DeclareMathOperator{\diver}{div}			
\DeclareMathOperator{\Lip}{Lip}				
\DeclareMathOperator{\supp}{supp}			
\newcommand{\Bk}{\color{black}}
\newcommand{\hel}
{
\hskip2.5pt{\vrule height8pt width.5pt depth0pt}
\hskip-.2pt\vbox{\hrule height.5pt width8pt depth0pt}
\,
}
\renewcommand{\geq}{\geqslant}
\renewcommand{\leq}{\leqslant}
\renewcommand{\epsilon}{\varepsilon}
\begin{document}

\title[Solvability of the divergence equation]{Solvability in weighted Lebesgue spaces\\of the divergence equation with measure data}

\author{Laurent Moonens}
\address{Universit\'e Paris-Saclay, CNRS, Laboratoire de math\'ematiques d'Orsay, 91405, Orsay, France}
\email{laurent.moonens@universite-paris-saclay.fr}

\author{Emmanuel Russ}
\address{Universit\'e Grenoble Alpes, Institut Fourier, 100 rue des maths, 38610, Gi\`eres, France}
\email{emmanuel.russ@univ-grenoble-alpes.fr}

\maketitle

\begin{abstract}
In the following paper, one studies, given a bounded, connected open set $\Omega\subseteq\R^n$, $\kappa>0$, a positive Radon measure $\mu_0$ in $\Omega$ and a (signed) Radon measure $\mu$ on $\Omega$ satisfying $\mu(\Omega)=0$ and $|\mu|\leq \kappa\mu_0$, the possibility of solving the equation $\diver u=\mu$ by a vector field $u$ satisfying $|u|\lesssim \kappa w$ on $\Omega$ (where $w$ is an integrable weight only related to the geometry of $\Omega$ and to $\mu_0$), together with a mild boundary condition.

This extends results obtained in \cite{DMRT} for the equation $\diver u=f$, improving them on two aspects: one works here with the divergence equation with \emph{measure} data, and also construct a weight $w$ that relies in a softer way on the geometry of $\Omega$, improving its behavior (and hence the \emph{a priori} behavior of the solution we construct) substantially in some instances.

The method used in this paper follows a constructive approach of Bogovskii type.
\end{abstract}

\section*{Introduction}

Let $\Omega\subset \R^n$ be an arbitrary bounded connected open subset. We are interested in the following question, stated here in a rather vague fashion: given a signed Radon measure $\mu$ on $\Omega$ with $\mu(\Omega)=0$, does there exist a vector field $u$ in a weighted $L^{\infty}$ space on $\Omega$ solving the boundary value problem:
\begin{equation}\label{eq.pbl}
\left\{
\begin{array}{ll}
\diver u=\mu \Bk & \mbox{ in }\Omega,\\
u\cdot \nu=0 \Bk & \mbox{ on }\partial\Omega,
\end{array}
\right.
\end{equation}
where the boundary condition for $u$ means at least that, for all $\varphi\in C^{\infty}(\R^n)$ having compact support in $\R^n$ (and hence allowed to be nonzero on and around the boundary of $\Omega$), one has:
$$
\int_{\Omega} u(x)\cdot \nabla \varphi(x)dx=-\int_{\Omega} \varphi(x)d\mu(x) ?
$$
\par
\noindent When $d\mu(x)=f(x)dx$, \Bk where $f\in L^{\infty}(\Omega)$ has zero integral, this question, which was widely studied when $f\in L^p(\Omega)$ with $1<p<+\infty$ and $\Omega$ is smooth (see, for instance, \cite[Section 7]{BB} for the construction of a solution in $W^{1,p}_0(\Omega)$), was previously addressed for arbitrary domains by Duran, Muschietti, Tchamitchian and the second author \cite{DMRT}. They characterized the bounded domains $\Omega$ with the following property: there exists an integrable weight $w>0$ in $\Omega$ such that, for all $f\in L^{\infty}(\Omega)$ with $\int_{\Omega} f(x)dx=0$, one can find a measurable vector field $u:\Omega\rightarrow \R^n$ such that $\left\vert u(x)\right\vert\lesssim w(x)$ for almost every $x\in \Omega$, solving
\begin{equation} \label{divermu}
\left\{
\begin{array}{ll}
\diver u=f \Bk & \mbox{ in }\Omega,\\
u\cdot \nu=0 \Bk & \mbox{ on }\partial\Omega,
\end{array}
\right.
\end{equation}
in the sense that:
\begin{equation}\label{eq.div-phi}
\int_{\Omega} u(x)\cdot \nabla\varphi(x)dx=-\int_{\Omega} f(x)\varphi(x)dx
\end{equation}
for all $\varphi\in L^1(\Omega)$ weakly differentiable and such that $w\nabla\varphi\in L^1(\Omega)$. Namely, they proved (see \cite[Theorem 2.1]{DMRT}) that the bounded domains $\Omega$ meeting this property are precisely the ones for which the geodesic distance to a fixed point in $\Omega$ is integrable in $\Omega$, and can also be described as the ones supporting a weighted $L^1$ Poincar\'e inequality.\Bk \par
We shall exhibit an example where the weight $w$ is unbounded in $\Omega$, but where it is still possible to construct a \emph{bounded} solution $u$ of \eqref{eq.pbl} where the boundary condition means at least that \eqref{eq.div-phi} holds for any test function $\varphi\in C^\infty(\R^n)$ with compact support in $\R^n$~---~hence allowed to be nonzero on, and around, $\partial\Omega$. In some situations, this can be achieved by working in an open measurable cover of $\Omega$, \emph{e.g.} in an open set $\hat{\Omega}\supseteq\Omega$ having the same Lebesgue measure as $\Omega$~---~see Example~\ref{ex.carre} below.\par
\noindent In the present paper, we improve the results of \cite{DMRT} replacing, in the right hand side of \eqref{divermu}, the function $f$ by a general signed Radon measure $\mu$ on $\Omega$ satisfying $\mu(\Omega)=0$~---~which, for some choices of $\mu_0$, forces the weight $w$ to be unbounded for some solution to exist in $L^\infty_{1/w}(\Omega,\R^n)$, even when $\Omega$ is smooth (see [Remark~\ref{rmk.init}, (ii)]). On the other hand, we formulate our results in an arbitrary open cover $\hat{\Omega}$ of $\Omega$ satisfying $|\hat{\Omega}|=|\Omega|$ (which can hence be $\Omega$ itself, or any open set containing $\Omega$ and contained in its essential interior for example), which, as we already explained, can, in some instances, yield a bounded weight (and hence a bounded solution to \eqref{divermu}) in $\Omega$ by choosing such a suitable cover.\\

Let us start by introducing more precisely the general framework of the paper, as well as stating more accurately its main results.

\section{Statements of the results} \label{results}
Throughout this paper, $n\geq 1$ is an integer and $m$ stands for the Lebesgue measure in $\R^n$. By ``domain'', we mean an open connected subset of $\R^n$. If $A,B$ are two nonempty subsets of $\R^n$, $d(A,B)$ denotes the distance between $A$ and $B$, that is $d(A,B)=\inf_{x\in A,\ y\in B} \left\vert x-y\right\vert$, where $\left\vert \cdot \right\vert$ is the Euclidean norm. If $E$ is a nonempty set and $A(f)$ and $B(f)$ are two nonnegative quantities for all $f\in E$, the notation $A(f)\lesssim B(f)$ means that there exists $C>0$ such that $A(f)\le CB(f)$ for all $f\in E$. Finally, for all open set $U\subset \R^n$, ${\mathcal D}(U)$ denotes the space of $C^{\infty}$ functions in $\R^n$ with compact support included in $U$.\par
\noindent Let us now state our precise results. Let $\Omega\subset \R^n$ be a bounded domain. {We choose, once and for all, a measurable cover $\tilde{\Omega}$ of $\Omega$, which in our case means that one has $\tilde{\Omega}\supseteq\Omega$ together with $m(\tilde{\Omega}\setminus\Omega)=0$ (see \cite[Definition 132D]{fremlin}). For all $x\in \tilde{\Omega}$, set:
$$
\tilde{d}(x):=d(x,\R^n\setminus \tilde{\Omega})
$$
and:
\begin{equation}\label{eq.omegatilde}
\hat{\Omega}:=\left\{x\in \tilde{\Omega}:\ \tilde{d}(x)>0\right\}.
\end{equation}
Note that $\hat{\Omega}$ is an open subset of $\R^n$.
\begin{Example}
For all $x\in \R^n$, say that $x$ is a point of density $1$ for $\Omega$ if
$$
\lim_{r\rightarrow 0} \frac{m(B(x,r)\cap\Omega)}{m(B(x,r))}=1.
$$
The {\it measure theoretic interior} of $\Omega$ is defined as the set ${\Omega_e}$ of points $x\in \R^n$ which have density $1$ for $\Omega$. Note that $\Omega\subset {\Omega_e}\subset \overline{\Omega}$, whereas the Lebesgue differentiation theorem shows that one has $m(\Omega_e\setminus\Omega)=0$. Hence $\Omega_e$ is a natural choice one can think of for the measurable cover $\tilde{\Omega}$, but all our results hold for a general cover of $\Omega$.
\end{Example}}
\begin{Example}
Let $\tilde{\Omega}$ be a fixed measurable cover of $\Omega$, and define $\hat{\Omega}$ according to (\ref{eq.omegatilde}). Since one has $\Omega\subseteq\hat{\Omega}\subseteq\tilde{\Omega}$, it is clear that $\hat{\Omega}$ is itself a measurable cover of $\Omega$. Moreover $\hat{\Omega}$ is obviously open, and since it verifies $\Omega\subseteq\hat{\Omega}\subseteq\bar{\Omega}$, it is straightforward to see that $\hat{\Omega}$ is also connected. We shall in the sequel define:
$$
\hat{d}(x):=d(x,\R^n\setminus \hat{\Omega})
$$
and let, for $\varepsilon>0$:
$$
\hat{\Omega}_{\varepsilon}:=\left\{x\in \hat{\Omega}:\ \hat{d}(x)>\varepsilon\right\}.
$$
From now on, many constructions will be done in the open set $\hat{\Omega}$.
\end{Example}
\par
\par A \emph{curve} is a continuous map $\gamma:[a,b]\rightarrow \R^n$, where $a<b$ are real numbers. We will frequently identify $\gamma$ and $\gamma([a,b])$. Say that $\gamma$ is \emph{rectifiable} if there exists $M>0$ such that, for all $N\geq 1$ and all $a=t_0<...<t_N=b$, there holds $\sum_{i=0}^{N-1} \left\vert \gamma(t_{i+1})-\gamma(t_i)\right\vert\leq M$, \Bk and define the length of $\gamma$, $l(\gamma)$, as the supremum of $\sum_{i=0}^{N-1} \left\vert \gamma(t_{i+1})-\gamma(t_i)\right\vert$ \Bk over all possible choices of $N$ and $a=t_0<...<t_N=b$.\par

\noindent Let $x_0\in \Omega$ be a fixed point in $\Omega$. For all $x\in \hat{\Omega}$, define $d_{\hat{\Omega}}(x)$ as the infimum of the lengths of all rectifiable curves $\gamma$ joining $x$ to $x_0$ in $\hat{\Omega}$ (note that such a curve always exists since $\hat{\Omega}$ is open and rectifiably path-connected), and call $d_{\hat{\Omega}}$ the geodesic distance to $x_0$ in $\hat{\Omega}$.\par

\noindent Let now $\mu_0$ be a fixed nontrivial finite (positive) Radon measure in $\Omega$. We intend to solve $\diver u=\mu$, where $\mu$ belongs to the class of all finite signed Radon measures $\mu$ in $\Omega$ satisfying $\mu(\Omega)=0$ and $\left\vert \mu\right\vert\lesssim \mu_0$. It turns out that a solution of this problem involves the integrability of $d_{\hat{\Omega}}$ with respect to $\mu_0$. We therefore introduce the following condition, which \Bk may be satisfied or not, and does not depend on the choice of $x_0$\footnote{Indeed, if $y_0\in\Omega$ is another point in $\Omega$, and if $d'_{\hat{\Omega}}$ denotes the geodesic distance to $y_0$ in $\hat{\Omega}$, then we have $|d_{\hat{\Omega}}(x)-{d}'_{\hat{\Omega}}(x)|\leq \dist_{\hat{\Omega}}(x_0,y_0)$ for all $x\in\Omega$, where $\dist_{\hat{\Omega}}(x_0,y_0)$ denotes the geodesic distance in $\hat{\Omega}$ between $x_0$ and $y_0$. 
It hence follows that the integrability on $\Omega$ of $d_{\hat{\Omega}}$ and ${d}'_{\hat{\Omega}}$ are equivalent.}:
\begin{equation} \label{dOmega}
d_{\hat{\Omega}}\in L^1(\mu_0).
\end{equation}

Let us now state our first result:
\begin{theorem} \label{main}
Let $\Omega\subset \R^n$ be a bounded domain. Then, there exists $C>0$ such that, for all nontrivial finite (positive) Radon measures $\mu_0$ in $\Omega$ such that \eqref{dOmega} holds, one can find a measurable weight $w_0$ in $\Omega$ with the following properties:
\begin{enumerate}
\item[(A)] $w_0\in L^1(\Omega)$ and $w_0(x)>0$ for almost every\footnote{Here and after, when no explicit measure is specified, ``almost every'' and $L^p$ spaces are considered with respect to the Lebesgue measure.\Bk} $x\in \Omega$,
\item[(B)] for any finite, signed Radon measure $\mu$ in $\Omega$ satisfying $\mu(\Omega)=0$ and $|\mu|\leq  \kappa\mu_0$ for some real number $\kappa>0$, there exists a vector-valued function $u$ solving (\ref{eq.pbl})
and satisfying the following estimate:
    \begin{equation}
    |u(x)|\leq  C\kappa  |w_0(x)|,
    \end{equation}
for a.e. $x\in\Omega$.\par
\noindent Here, by saying that $u$ solves (\ref{eq.pbl}) we mean that one has:
\begin{equation}\label{eq.g}
\int_{\Omega} u\cdot\nabla \varphi=-\int_\Omega \varphi\,d\mu,
\end{equation}
for any $\varphi\in \calD(\R^n)$, where $\calD(\R^n)$ stands for the set of all functions in $\calC^\infty(\R^n)$ whose support is a compact set.
%
\end{enumerate}
\end{theorem}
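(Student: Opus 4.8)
The plan is to produce $u$ by an explicit Bogovskii-type transport adapted to the (possibly very irregular) geometry of $\hat{\Omega}$, and to read off $w_0$ as the universal pointwise envelope of that construction. A useful preliminary remark, which tells us what $w_0$ must measure, is that since $\mu(\Omega)=0$ the weak formulation \eqref{eq.g} is dual to a weighted $L^1$ Poincar\'e inequality: testing against $\varphi-\varphi(x_0)$ and using $|\mu|\le\kappa\mu_0$, one checks that \eqref{eq.g} admits a solution with $|u|\le C\kappa w_0$ as soon as $\int_\Omega|\varphi-\varphi(x_0)|\,d\mu_0\le C\int_{\hat{\Omega}}w_0\,|\nabla\varphi|\,dm$ for every $\varphi\in\mathcal{D}(\R^n)$. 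This identifies $w_0$ with the accumulated $\mu_0$-mass whose geodesic toward $x_0$ passes near a given point; I will nonetheless build $u$ directly, so that the field and the weight emerge from the same construction.

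First I would fix a Whitney decomposition $\{Q\}$ of the open set $\hat{\Omega}$ and, on each cube $Q$, solve the local problem $\diver v_Q=\mu\llcorner Q-\mu(Q)\theta_Q$ by Bogovskii's operator relative to a fixed concentric sub-ball, where $\theta_Q$ is a normalised bump supported in that sub-ball. Since Bogovskii's kernel on a cube of a fixed shape is controlled by $|x-y|^{1-n}$ (uniformly after rescaling), this step passes verbatim to the measure datum $\mu\llcorner Q$ and yields $v_Q$ supported in $Q$ with vanishing normal trace, satisfying the pointwise bound $|v_Q(x)|\lesssim\int_Q|x-y|^{1-n}\,d|\mu|(y)$ and $\int_Q|v_Q|\,dm\lesssim\diam(Q)\,|\mu|(Q)$. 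Summing over $Q$ replaces $\mu$ by the discrete data $\sum_Q\mu(Q)\theta_Q$, whose total mass vanishes because $\mu(\hat{\Omega})=\mu(\Omega)=0$.

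Next I would connect the cubes into a tree rooted at a fixed cube $Q_0\ni x_0$, choosing the parent relation so that the chain $Q=Q_k,Q_{k-1},\dots,Q_0$ from any cube to the root runs through successively adjacent Whitney cubes and has total sidelength $\sum_j\ell(Q_j)$ comparable to the geodesic distance $d_{\hat{\Omega}}$ on $Q$. Writing $m_Q:=\sum_{Q''\preceq Q}\mu(Q'')$ for the cumulative mass of the subtree below $Q$, a telescoping transport $w:=\sum_{Q\neq Q_0}g_Q$, where $g_Q$ is a fixed unit-cost field moving $m_Q$ from $\theta_Q$ to $\theta_{\mathrm{par}(Q)}$, satisfies $\diver w=\sum_Q\mu(Q)\theta_Q$; hence $u:=\sum_Q v_Q+w$ solves $\diver u=\mu$, and the cancellation of the bumps when tested against arbitrary $\varphi\in\mathcal{D}(\R^n)$ gives exactly \eqref{eq.g}, with the boundary condition of \eqref{eq.pbl} built in because every constituent field is supported inside $\hat{\Omega}$ with vanishing normal trace. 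Here $|m_Q|\le\kappa M_Q$ with $M_Q:=\mu_0\big(\bigcup_{Q''\preceq Q}Q''\big)$ depending only on $\mu_0$, and $|g_Q|\lesssim\kappa\,M_Q\,\ell(Q)^{1-n}$ on $Q\cup\mathrm{par}(Q)$.

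Finally I would set $w_0(x):=\int_{Q(x)}|x-y|^{1-n}\,d\mu_0(y)+\sum_{Q\ni x}M_Q\,\ell(Q)^{1-n}+\eta(x)$, where $Q(x)$ is the cube containing $x$, the middle sum runs over the boundedly many cubes whose transport field reaches $x$, and $\eta>0$ is a fixed auxiliary $L^1$ function added only to force $w_0>0$ almost everywhere (property (A)); by construction $|u|\le C\kappa w_0$ pointwise with $C=C(n)$. The integrability $w_0\in L^1(\Omega)$ is where hypothesis \eqref{dOmega} is consumed: reversing the order of summation gives $\int_{\hat{\Omega}}\sum_Q M_Q\,\ell(Q)^{1-n}\,dm\lesssim\sum_Q M_Q\,\ell(Q)=\sum_{Q''}\mu_0(Q'')\sum_{Q\succeq Q''}\ell(Q)\lesssim\int_\Omega\big(d_{\hat{\Omega}}+\diam\Omega\big)\,d\mu_0<\infty$, while the local Riesz term is controlled by $\int_Q\int_Q|x-y|^{1-n}\,dm(x)\,d\mu_0(y)\lesssim\diam(Q)\,\mu_0(Q)$. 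The main obstacle is precisely the geometric input feeding that penultimate estimate: for an arbitrary bounded domain one must build the Whitney tree measurably, and with bounded overlap of the transport supports, so that the chain length from a cube to $Q_0$ is dominated by $d_{\hat{\Omega}}$ rather than by the a priori much larger quasi-hyperbolic length. Establishing this comparison, together with the bookkeeping that keeps $C$ independent of $\mu_0$ and of $\mu$, is the heart of the matter; the local Bogovskii estimates and the telescoping cancellation are then routine.
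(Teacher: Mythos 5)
Your proposal follows a genuinely different implementation of the Bogovskii philosophy than the paper, and it contains one real gap. For comparison: the paper never decomposes $\hat{\Omega}$ into cubes at the level of the solution. It imports from \cite{DMRT} a measurable system of rectifiable curves $\gamma_y$ joining each $y\in\hat{\Omega}$ to $x_0$, with $l(\gamma_y)\lesssim d_{\hat{\Omega}}(y)$ and the Ahlfors-type bound $l(\gamma_y\cap B(x,r))\lesssim r$ (Lemma~\ref{paths}), writes a single global kernel $G(x,y)$ as an integral of rescaled bumps along $\gamma_y$, and sets $u(x)=\int_\Omega G(x,y)\,d\mu(y)$; splitting the kernel into its near and far parts produces exactly the weight $w_0=I_1\mu_0+\omega\,\hat{d}^{\,1-n}$ of \eqref{wmu}, where $\omega(x)$ is the $\mu_0$-mass of the points whose curve passes within $\tfrac12\hat{d}(x)$ of $x$. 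Your two-scale scheme (local Bogovskii on Whitney cubes plus a discrete transport along chains) is a faithful discretization of this: your local Riesz term is the analogue of $I_1\mu_0$, your sum $\sum_Q M_Q\,\ell(Q)^{1-n}$ is the analogue of $\omega\,\hat{d}^{\,1-n}$, and your interchange of summation, $\sum_Q M_Q\,\ell(Q)=\sum_{Q''}\mu_0(Q'')\sum_{Q\succeq Q''}\ell(Q)\lesssim\int_\Omega(d_{\hat{\Omega}}+\diam\Omega)\,d\mu_0$, is precisely the discrete form of Lemma~\ref{omega}. What your version buys is purely dimensional constants in the local estimates, automatic measurability (each point uses the chain of its Whitney cube), and a transparent accounting of where hypothesis \eqref{dOmega} is consumed.

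The gap is in the step you yourself flag, and it is worse than you suggest: as formulated, your telescoping uses subtree masses $m_Q=\sum_{Q''\preceq Q}\mu(Q'')$, hence requires a genuine \emph{tree} --- every cube with a single parent, and tree-chains of total sidelength $\lesssim d_{\hat{\Omega}}$. The geometric input actually available (and the one the paper uses) is weaker: a near-geodesic path, or chain of cubes, \emph{for each point separately}, with no consistency required between the chains of different points. Turning such a path system into a tree is not innocent: if you set $\mathrm{par}(Q)$ to be the next cube along $Q$'s own near-geodesic chain, then the tree-chain of $Q$ switches at every step to a \emph{different} cube's chain, and no quantity controls its total length; and there is no citable construction of such a tree for arbitrary bounded domains satisfying only \eqref{dOmega}. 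Fortunately your argument does not need the tree. Route each cube's own mass $\mu(Q'')$ along that cube's chain to $Q_0$ (after loop erasure, so a chain meets any given cube boundedly often), and replace $M_Q$ by the flow-through mass $\tilde{M}_Q:=\sum\left\{\mu_0(Q''):\ \text{the chain of }Q''\text{ passes through }Q\right\}$, which is exactly the discrete avatar of the paper's $\omega$. The divergence bookkeeping then telescopes along each chain separately, the transport field is bounded by $C\kappa\tilde{M}_Q\,\ell(Q)^{1-n}$ near $Q$, and your integrability computation goes through verbatim since $\sum_Q\tilde{M}_Q\,\ell(Q)=\sum_{Q''}\mu_0(Q'')\sum_{Q\in\mathrm{chain}(Q'')}\ell(Q)\lesssim\int_\Omega(d_{\hat{\Omega}}+\diam\Omega)\,d\mu_0$. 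In this per-cube-chain form, the geometric lemma you need is exactly the discrete content of Lemma~\ref{paths}, i.e.\ of the construction in \cite{DMRT}, and your proof closes as a legitimate alternative route to Theorem~\ref{main}.
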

\begin{Remark}\label{rmk.init}
Let us formulate two first remarks.
\begin{itemize}
\item[(i)] The reader will notice, at this stage, that \eqref{eq.g} contains a weak Neumann-type boundary condition in the fact that test functions in $\calD(\R^n)$ are allowed to be nonzero on, and around, the boundary of $\Omega$; equation \eqref{eq.g} can hence be interpreted as an integration by parts formula where the boundary term is zero. We shall study, in section~\ref{sec.bdry}, how this condition can be strengthened by enlarging the set of test functions for which \eqref{eq.g} holds~---~showing in particular that \eqref{eq.g} extends to functions $\varphi\in\Lip(\R^n)$, where the latter notation stands for the space of all Lipschitz functions on $\R^n$.
\item[(ii)] In some cases, the weight $w_0$ constructed in the previous theorem \emph{must} be unbounded, even if $\Omega$ is a smooth domain, a ball for instance. Assume indeed, for example, that $\mu_0$ is a finite Radon measure in $\Omega$ satisfying (for some $0<\epsilon<n-1$) $\mu_0(B(a,r))\geq cr^{n-1-\epsilon}$ for all $0<r<r_0$ and some fixed $a\in\Omega$ satisfying $\mu_0(\{a\})=0$. 
Without loss of generality, we may assume that $0<m_0:=\mu_0(B(a,r_0))<\frac 12\mu_0(\Omega)$. Now construct a signed measure $\mu$ on $\Omega$ defined by:
$$
\mu:=\mu_0\hel B(a,r_0)-\frac{m_0}{\mu_0(\Omega)-m_0}\mu_0\hel (\Omega\setminus B(a,r_0)).
$$
It is clear that one has $\mu(\Omega)=0$, $|\mu|\leq \mu_0$ and $\mu(B(a,r))\geq cr^{n-1-\epsilon}$ for all $0<r<r_0$.
Now if one were able to solve (\ref{eq.pbl}) in the lines of the above theorem by $u\in L^\infty_{1/w_0}(\Omega,\R^n)$ with a bounded $w_0$, the Gauss-Green formula borrowed from \cite[Theorem~2.10]{PHUCTORRES} would imply
$$
cr^{n-1-\epsilon}\leq \mu(B(a,r))\leq \left\|\frac{u}{w_0}\right\|_\infty \int_{\partial B(a,r)} w_0\,d\calH^{n-1}\leq Cr^{n-1}
$$
for almost every $0<r<r_0$, which yields a contradiction.
\end{itemize}
\end{Remark}

Actually, the condition \eqref{dOmega} is also necessary for the existence of a weight $w_0$ meeting the conclusions of Theorem \ref{main}. These are both equivalent to an $L^1$ Poincar\'e inequality, as stated in the next theorem:
\begin{theorem} \label{equivpoincare}
Let $\Omega\subset \R^n$ be a bounded domain and $\mu_0$ be a nontrivial finite positive Radon measure, and define $d_{\hat{\Omega}}$ as before. The following conditions are equivalent:
\begin{enumerate}
\item[(a)] $d_{\hat{\Omega}}\in L^1(\mu_0)$,
\item[(b)] there exists a weight $w\in L^1(\Omega)$, $w>0$ a.e.\ satisfying the conclusions of Theorem~\ref{main},
\item[(c)] there exists an integrable weight $w\in L^1(\Omega)$, $w>0$ a.e.\ yielding the following Poincar\'e inequality for all locally Lipschitz functions $f$ on $\hat{\Omega}$ belonging to $L^1(\mu_0)$ whose local Lipschitz constant is bounded on $\hat{\Omega}$:
\begin{equation} 
\int_{\Omega} \left\vert f(x)-f_{\Omega}\right\vert d\mu_0\lesssim \int_{\Omega} \left\vert \nabla f\right\vert w,
\end{equation}
where $f_{\Omega}:=\frac 1{\mu_0(\Omega)}\int_{\Omega} fd\mu_0$.
\end{enumerate}
\end{theorem}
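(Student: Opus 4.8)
The plan is to prove the three statements equivalent by establishing the cycle $(a)\Rightarrow(b)\Rightarrow(c)\Rightarrow(a)$. The implication $(a)\Rightarrow(b)$ requires no work: Theorem~\ref{main} asserts exactly that, as soon as \eqref{dOmega} holds, there is a weight $w\in L^1(\Omega)$, positive almost everywhere, meeting its own conclusions, and this is precisely statement~(b).

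For $(b)\Rightarrow(c)$ I would argue by duality, testing the divergence equation against $f$ itself. Fix an admissible $f$ (locally Lipschitz on $\hat\Omega$, with bounded local Lipschitz constant, and $f\in L^1(\mu_0)$), put $g:=\operatorname{sgn}(f-f_\Omega)$ and $c:=\tfrac1{\mu_0(\Omega)}\int_\Omega g\,d\mu_0$, so that $|c|\leq1$. The signed measure $\mu:=(g-c)\mu_0$ then satisfies $\mu(\Omega)=0$ and $|\mu|\leq 2\mu_0$, so invoking~(b) with $\kappa=2$ yields a field $u$ with $|u|\leq 2Cw$ solving \eqref{eq.g}. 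Inserting $\varphi=f-f_\Omega$ into \eqref{eq.g}, the constant disappears from the gradient, the term $c\int_\Omega(f-f_\Omega)\,d\mu_0$ vanishes by the very definition of $f_\Omega$, and $(f-f_\Omega)g=|f-f_\Omega|$, so that
\[
\int_\Omega |f-f_\Omega|\,d\mu_0 \;=\; -\int_\Omega u\cdot\nabla f \;\leq\; 2C\int_\Omega |\nabla f|\,w,
\]
which is the Poincar\'e inequality of~(c) (note $|\nabla f|\in L^\infty$ by Rademacher, so all integrals converge since $w\in L^1(\Omega)$).

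I expect the genuinely delicate point to be the insertion of $\varphi=f-f_\Omega$ into \eqref{eq.g}, whose admissible test functions are a priori only those in $\calD(\R^n)$. The clean way to justify it is to enlarge the class of admissible test functions to locally Lipschitz functions with bounded local Lipschitz constant — this is exactly the strengthening of the boundary condition announced in Remark~\ref{rmk.init}(i) and carried out in Section~\ref{sec.bdry}, and it is no accident that the class appearing in~(c) coincides with it. Failing that, one mollifies $f$ along the exhaustion $(\hat\Omega_\varepsilon)_\varepsilon$ and passes to the limit, using $|\nabla f|\in L^\infty$ together with $w\in L^1(\Omega)$ to kill the contributions near $\partial\hat\Omega$ (they vanish since $\int_{\hat\Omega\setminus\hat\Omega_\varepsilon} w\to 0$) and the $\mu_0$-almost everywhere convergence of the mollifications of the continuous function $f$ on the right-hand side.

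Finally, for $(c)\Rightarrow(a)$ I would feed the Poincar\'e inequality the truncated geodesic distances $f_k:=\min(d_{\hat\Omega},k)$, which are bounded (hence in $L^1(\mu_0)$, as $\mu_0$ is finite), locally Lipschitz on $\hat\Omega$, and of local Lipschitz constant $\leq1$, so $|\nabla f_k|\leq1$. The inequality then gives $\int_\Omega |f_k-(f_k)_\Omega|\,d\mu_0 \leq C'\int_\Omega |\nabla f_k|\,w \leq C'\|w\|_{L^1(\Omega)}$, a bound uniform in $k$. Converting this oscillation bound into a uniform bound on the means $(f_k)_\Omega$ is the last hurdle: choose $x_1\in\supp\mu_0$ and a small ball $B$ with $\overline B\subset\hat\Omega$, so that $d_{\hat\Omega}\leq D<\infty$ on $B$ and $\mu_0(B)>0$; then if $(f_k)_\Omega>2D$ one has $f_k\leq D<(f_k)_\Omega/2$ on $B$, whence $B\subseteq\{|f_k-(f_k)_\Omega|>(f_k)_\Omega/2\}$, and Chebyshev forces $(f_k)_\Omega\leq 2C'\|w\|_{L^1(\Omega)}/\mu_0(B)$. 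Thus $(f_k)_\Omega\leq\max\{2D,\,2C'\|w\|_{L^1(\Omega)}/\mu_0(B)\}$ uniformly in $k$, and since $f_k\uparrow d_{\hat\Omega}$, monotone convergence gives $\int_\Omega d_{\hat\Omega}\,d\mu_0=\lim_k (f_k)_\Omega\,\mu_0(\Omega)<\infty$, that is~\eqref{dOmega}. Closing the cycle completes the proof.
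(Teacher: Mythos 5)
Your proposal follows essentially the paper's own route (the paper proves the sharper Theorem~\ref{variousequiv}): (a)$\Rightarrow$(b) is Theorem~\ref{main}; (b)$\Rightarrow$(c) is a duality argument on the divergence equation; (c)$\Rightarrow$(a) feeds the Poincar\'e inequality the geodesic distance. Two packaging differences are worth recording. In (b)$\Rightarrow$(c) you test against the single extremal function $g=\operatorname{sgn}(f-f_\Omega)$, whereas the paper dualizes against all $g\in L^\infty(\Omega,\mu_0)$ with $\|g\|_\infty\leq 1$; these are interchangeable. In (c)$\Rightarrow$(a) the paper goes through the auxiliary inequality \eqref{poincare1weak} and its equivalence with \eqref{poincare1} (Proposition~\ref{prop.poincare}, whose clause ``finiteness of the right-hand side implies $f\in L^1(\mu_0)$'' is what handles the fact that $d_{\hat{\Omega}}$ is not a priori $\mu_0$-integrable), applying it to $(d'_{\hat{\Omega}}-r_0)_+$; you instead apply (c) directly to the truncations $\min(d_{\hat{\Omega}},k)$, recover a uniform bound on the means $(f_k)_\Omega$ by a Chebyshev argument on a fixed ball of positive $\mu_0$-measure compactly contained in $\hat{\Omega}$, and conclude by monotone convergence. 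That variant is correct and self-contained: it re-proves, in the special case needed, exactly the content of Proposition~\ref{prop.poincare} that the paper's proof relies on.

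The one place where you should be careful is the step you yourself flag as delicate: inserting $\varphi=f-f_\Omega$ into \eqref{eq.g}. Your primary justification~---~invoking Section~\ref{sec.bdry}~---~is precisely the paper's (``By (b) and Theorem~\ref{main2}\dots''), with the same caveat in both texts: Theorem~\ref{main2} is a statement about the particular Bogovskii solution built in Proposition~\ref{solutioninfty} under hypothesis (a) and with the weight $w_0$, not about an arbitrary field furnished by an arbitrary weight satisfying (b); as written, both your argument and the paper's really deliver (a)$\Rightarrow$(c), so you are no worse off than the paper here, but you should state that you take the constructed solution and weight. Your fallback argument, on the other hand, should be dropped: mollifying $f$ ``along the exhaustion $(\hat{\Omega}_\varepsilon)_\varepsilon$'' requires cutoffs, whose gradients are of size $\varepsilon^{-1}$ on the transition strip, and those terms are \emph{not} controlled by $|\nabla f|\in L^\infty$ together with $\int_{\hat{\Omega}\setminus\hat{\Omega}_\varepsilon}w\to 0$ (one would need $\varepsilon^{-1}\int_{\hat{\Omega}\setminus\hat{\Omega}_\varepsilon}w\,|f|\to 0$, which fails in general). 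What makes the paper's proof of Theorem~\ref{main2} work, and what an abstract solution coming from (b) need not satisfy, is a structural support property of the constructed field: after restricting $\mu$ to $\hat{\Omega}_\epsilon$, the solution $u_\epsilon$ vanishes outside $\hat{\Omega}_{2\delta/3}$ (via property (d) of Lemma~\ref{paths}), so that mollification can be performed with no cutoff at all. Keep the appeal to Theorem~\ref{main2} and delete the fallback.
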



Before starting, let us present the structure of the present paper by sketching how one can obtain its main Theorems.\\

The proof of Theorem~\ref{main} goes as follows. Mainly, one first constructs a solution $u$ to the equation $\diver u=\mu$ using a Bogovskii-type representation formula inspired by \cite{DMRT} (and relying on a previous work by Bogovskii \cite{bog}). The main idea is to represent $u$ as an integral of the form:
$$
u(x)=\int_\Omega G(x,y)\,d\mu(y),
$$
where $G(x,y)$ is a Bogovskii type kernel suitable for our problem, which satisfies growth estimates yielding the boundedness of $u$ with respect to some integrable weight. In this paper, we shall devote section~\ref{sec.bog} to the construction of a solution $u$ by means of such a representation formula, and to the study of the associated Bogovskii-type kernel. Let us just mention for now that the definition of this kernel heavily relies on a system of paths, borrowed from \cite{DMRT}, joining any point in $\Omega$ to a fixed one, in an almost ``geodesic'' fashion while remaining \emph{inside} the given measurable cover of $\Omega$ we work in (namely, $\hat{\Omega}$). It is then a combination of routine approximation arguments, and subtle properties of the paths system, that the vector field $u$ constructed using this approach, satisfies the boundary conditions implicitly contained in (\ref{eq.g}), and even stronger ones; we devote section~\ref{sec.bdry} to studying those boundary issues.\\

Let us mention that the equation \eqref{divermu}, with a measure valued right hand side, was widely studied in \cite{PHUCTORRES} in the whole space $\R^n$. To our best knowledge, the present work is the first time that a Bogovskii type approach is proposed to solve \eqref{divermu} in a bounded general domain.

As far as Theorem~\ref{equivpoincare} is concerned, the equivalence of the three stated properties will follow from duality arguments and, roughly speaking, from applying (some version of) Poincar\'e's inequality to the distance function $d_{\hat{\Omega}}$. Proving Theorem~\ref{equivpoincare} will be the purpose of section~\ref{Poinc}.\par

\noindent {\bf Acknowledgements:} 
This research was conducted during the visits of the authors to Laboratoire de Math\'ematiques d'Orsay and the Institut Fourier. The authors wish to thank the institutes for the kind hospitality. The second author was partially  supported by the French ANR project ``RAGE'' no.~ANR-18-BCE40-0012.

\section{A Bogovskii-type representation formula}\label{sec.bog}

Before we start describing the procedure announced in the introduction, we borrow  from \cite{DMRT} the construction of a system of paths in $\hat{\Omega}$, which relies on a decomposition of $\hat{\Omega}$ into Whitney cubes.

Recall that $\hat{d}$ denotes the distance function to the complement of $\hat{\Omega}$.
We fix as before $x_0\in\Omega$ and denote by $d_{\hat{\Omega}}$ the geodesic distance, in $\hat{\Omega}$, to $x_0$. Dilating the whole setting by some factor around $x_0$, we may moreover assume (which will be useful later for computational purposes) that one has:
\begin{equation} \label{dx015}
\overline{B(x_0,1)}\subseteq \Omega\quad\text{and}\quad \hat{d}(x_0)\geq 15.
\end{equation}
Applying the result in \cite[p.~800]{DMRT} to the open set $\hat{\Omega}$ and to $x_0$, we get a family of paths in $\hat{\Omega}$ which enjoys a series of properties. This family will be used in the next section to solve the divergence equation by a Bogovskii-type approach.
\begin{Lemma} \label{paths}
For all $y\in \hat{\Omega}$, there exists a rectifiable curve $\gamma_y:[0,1]\rightarrow \hat{\Omega}$ such that, writing $\gamma(t,y)=\gamma_y(t)$, the following properties hold:
\begin{enumerate}
\item[$(a)$] for all $y\in \hat{\Omega}$, $\gamma(0,y)=y$, $\gamma(1,y)=x_0$,
\item[$(b)$]
$(t,y)\mapsto \gamma(t,y)$ is measurable,
\item[$(c)$]
for all $x,y \in \Omega$ and all $r\leq \frac 12 \hat{d}(x)$,
    \begin{equation} \label{Ahlfors}
    l(\gamma_y \cap B(x, r)) \lesssim r
    \end{equation}
and
	\begin{equation} \label{length}
	l(\gamma_y) \lesssim d_{\hat{\Omega}}(y),
	\end{equation}
\item[$(d)$]
for all $\varepsilon >0$ small enough, there exists $\delta >0$ such that
$$
\forall y \in \hat{\Omega}_{\varepsilon}, \ \gamma_y \subset \hat{\Omega}_{\delta}.
$$
\end{enumerate}
\end{Lemma}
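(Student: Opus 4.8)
The plan is to unpack the construction of \cite[p.~800]{DMRT}, now applied verbatim to the open connected set $\hat{\Omega}$ with base point $x_0$, and to verify the four listed properties, spending most of the effort on $(d)$. First I would fix a Whitney decomposition $\hat{\Omega}=\bigcup_k Q_k$ into dyadic cubes with $\diam Q_k$ comparable to $d(Q_k,\R^n\setminus\hat{\Omega})$, and select a distinguished cube $Q_0\ni x_0$; the normalization $\hat{d}(x_0)\geq 15$ from \eqref{dx015} guarantees that $Q_0$ has side length bounded below by an absolute constant. Since $\hat{\Omega}$ is connected, every Whitney cube $Q$ can be joined to $Q_0$ by a chain $Q=R_0,R_1,\dots,R_m=Q_0$ of successively touching cubes; the construction of \cite{DMRT} organizes these chains into a tree rooted at $Q_0$ in which consecutive cubes have comparable side lengths and, crucially, the scales do not decrease as one moves toward the root $Q_0$. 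I would fix one such chain for each cube once and for all.

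For $y\in\hat{\Omega}$ lying in a cube $Q(y)$, I would then set $\gamma_y$ to be the concatenation of the segment from $y$ to the center of $Q(y)$, followed by the polygonal path through the centers of the cubes of the fixed chain from $Q(y)$ to $Q_0$, ending at $x_0$; an affine reparametrization produces a curve on $[0,1]$. Property $(a)$ is then immediate. For the measurability $(b)$, I would note that $y\mapsto Q(y)$ is piecewise constant (hence measurable) off a Lebesgue-null set of cube boundaries, that the chain attached to each cube does not depend on $y$, and that the affine parametrizations can be chosen consistently; thus $(t,y)\mapsto\gamma(t,y)$ is a countable gluing, over the measurable pieces $Q_k$, of maps continuous in $t$ and depending measurably on $y$.

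The quantitative estimates $(c)$ are the core of the Whitney construction and follow as in \cite{DMRT}. For the length bound, $l(\gamma_y)$ is comparable to the sum of the diameters of the cubes along the chain, which the near-geodesic tree structure keeps $\lesssim d_{\hat{\Omega}}(y)$. For the Ahlfors bound, the observation is that a ball $B(x,r)$ with $r\leq\tfrac12\hat{d}(x)$ only meets Whitney cubes of diameter $\gtrsim r$ (every point of $B(x,r)$ has distance $\geq\tfrac12\hat{d}(x)\geq r$ to $\R^n\setminus\hat{\Omega}$), and only boundedly many of them; since $\gamma_y$ crosses each such cube in a controlled way, $l(\gamma_y\cap B(x,r))\lesssim r$.

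The property I expect to cost the most care is $(d)$, since it encodes the ``no descent toward the boundary'' feature on which the boundary analysis of section~\ref{sec.bdry} will rely. Its mechanism is exactly the monotonicity of scales built into the tree of chains: moving along the chain from $Q(y)$ toward the root $Q_0$, the cubes never shrink below a fixed multiple of $\diam Q(y)\approx\hat{d}(y)$, so every point of $\gamma_y$ stays at distance at least $\delta=c\,\hat{d}(y)$ from $\R^n\setminus\hat{\Omega}$. Consequently, if $y\in\hat{\Omega}_\varepsilon$, i.e.\ $\hat{d}(y)>\varepsilon$, then $\gamma_y\subset\hat{\Omega}_{c\varepsilon}$, which is the claim with $\delta=c\varepsilon$. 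The genuine technical obstacle is to arrange the chain selection so as to satisfy \emph{simultaneously} the comparability needed for $(c)$ and the scale-monotonicity needed for $(d)$, all while preserving measurability of $y\mapsto\gamma_y$; this is precisely what the construction of \cite[p.~800]{DMRT} is engineered to provide, so I would cite it for $(a)$--$(c)$ and supply the short monotonicity argument above for $(d)$.
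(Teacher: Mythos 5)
Your construction founders on property $(d)$, and the failure is structural, not cosmetic. The mechanism you invoke~---~a tree of Whitney chains in which ``the scales do not decrease as one moves toward the root $Q_0$'', so that every point of $\gamma_y$ stays at distance $\geq c\,\hat{d}(y)$ from $\R^n\setminus\hat{\Omega}$~---~is a John/Boman-type chain condition, and it simply does not hold for the arbitrary bounded domains this paper works with. Consider a dumbbell: two unit balls joined by a neck of width $w\ll 1$, with $x_0$ in one ball and $y$ the center of the other. Then $\hat{d}(y)\approx 1$, yet \emph{every} curve from $y$ to $x_0$, in particular every chain of Whitney cubes, must traverse the neck, where the cubes have side $\approx w$ and the distance to the complement is $\leq w$. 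So no constant $c$ (even one depending on the domain) makes your scale-monotonicity claim true, and the quantitative conclusion $\delta=c\varepsilon$ you derive from it is false: in the dumbbell one is forced to take $\delta\lesssim w$ no matter how large $\varepsilon$ is. Property $(d)$ as stated is deliberately non-quantitative, and that is not an accident of formulation. (The same issue silently infects your length bound in $(c)$: for an arbitrary domain, the sum of diameters of cubes along a touching chain need not be $\lesssim d_{\hat{\Omega}}(y)$ unless the chain is built from a near-geodesic in the first place.)

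The paper's proof takes a different and weaker~---~but correct~---~route. For each Whitney cube center $x_j$ one \emph{selects} a rectifiable path $\gamma_j$ from $x_j$ to $x_0$ that is nearly geodesic, $l(\gamma_j)\leq 2 d_{\hat{\Omega}}(x_j)$ (this is what gives \eqref{length}, together with $|x_j-y|\lesssim l_j\lesssim d_{\hat{\Omega}}(y)$), and that satisfies $\varepsilon_j:=d\bigl(\gamma_j,\R^n\setminus\hat{\Omega}\bigr)>0$~---~a soft fact, available because a compact curve inside the open set $\hat{\Omega}$ has positive distance to its complement; the path is then modified cube-by-cube into segments to get \eqref{Ahlfors} via a counting argument close to the one you sketch. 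Property $(d)$ is then obtained by \emph{finiteness}, not monotonicity: only finitely many Whitney cubes meet $\hat{\Omega}_\varepsilon$; for $y$ in such a cube $Q_j$ the initial segment from $y$ to $x_j$ stays at distance $\gtrsim\varepsilon$ from the complement, and the remaining part is one of the finitely many fixed paths $\gamma_j$, each with $\varepsilon_j>0$; taking $\delta$ to be the minimum of these finitely many positive quantities gives the claim, with no control whatsoever on how $\delta$ degenerates as $\varepsilon\to 0$. If you want to salvage your write-up, you must abandon the scale-monotonicity argument for $(d)$ and replace it by this selection-plus-finiteness argument.
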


We now introduce once and for all the weight that will be used throughout the paper.
Let $\mu_0$ be a Radon measure on $\Omega$. Assume that the geodesic distance to $x_0$ in $\hat{\Omega}$, namely the function $d_{\hat{\Omega}}$, satisfies \eqref{dOmega}. Define a function $\omega$ on $\Omega$ by:
\begin{equation} \label{defomega}
\omega(x)=\mu_0\left(\left\{y\in \Omega:\ \mbox{there exists }t\in [0,1]\mbox{ such that }\left\vert \gamma(t,y)-x\right\vert\leq \frac 12 \hat{d}(x)\right\}\right).
\end{equation}
\noindent We also define a localized\footnote{Note that, contrary to the usual definition, we integrate over $\Omega$ in the definition of $I_1\mu_0$.} version of the Riesz potential $\mu_0$, $I_1\mu_0$, by letting, for $x\in\Omega$
$$
I_1\mu_0(x):=\int_{\Omega} \left\vert x-y\right\vert^{-n+1}d\mu_0(y).
$$
\noindent Define finally, for $x\in\Omega$:
\begin{equation}\label{wmu}
w_0(x):=I_1\mu_0(x)+\omega(x)\hat{d}(x)^{-n+1}.
\end{equation}
It is shown in \cite{DMRT} that, when $\mu_0=\calL^n$ is the Lebesgue measure in $\R^n$, one can actually work with the weight $w_0(x):=\omega(x)\hat{d}(x)^{1-n}$.\\

%
%
\par The present section is devoted to proving Theorem~\ref{main} in the latter context by constructing a solution to the equation $\diver u=\mu$ in $L^\infty_{1/w_0}$ satisfying (\ref{eq.g}) for all $\varphi\in \calD(\R^n)$ (recalling that this includes some mild boundary condition on $u$)~---~we shall discuss in the next section how \eqref{eq.g} can, in some cases, be extended to a larger class of test functions (hence yielding a stronger boundary condition on $u$).

Let us restate Theorem~\ref{main} by making $w_0$ explicit.
\begin{Proposition} \label{solutioninfty}
Assume that $\mu_0$ is a nontrivial finite (positive) Radon measure in $\Omega$ satisfying (\ref{dOmega}) and let $w_0$ be the weight defined by (\ref{wmu}). Then, the following properties hold:
\begin{enumerate}
\item[(A)] $w_0\in L^1(\Omega)$ and $w_0(x)>0$ for almost every $x\in \Omega$,
\item[(B)] for all $\kappa>0$, for any finite, signed Radon measure $\mu$ in $\Omega$ satisfying $\mu(\Omega)=0$ and $|\mu|\leq  \kappa\mu_0$ for some real number $\kappa>0$, there exists a vector-valued function $u$ satisfying the following two properties:
\begin{itemize}
\item[(i)]  for all $\varphi\in\calD(\R^n)$, one has:
\begin{equation}\label{eq.form-faible}
\int_\Omega u\cdot\nabla\varphi=-\int_\Omega \varphi\,d\mu,
\end{equation}
so that in particular $u$ solves weakly the equation $\diver u=\mu$ in $\Omega$;
\item[(ii)] for a.e.\ $x\in\Omega$, one has:
    \begin{equation}\label{estimLinfty}
    |u(x)|\leq  C\kappa  |w_0(x)|,
    \end{equation}
where $C>0$ only depends on $\Omega$ and the choice of the family $\gamma$.
\end{itemize}
\end{enumerate}
\end{Proposition}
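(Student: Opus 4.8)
The plan is to build a Bogovskii-type kernel $G(x,y)\in\R^n$ adapted to the path system of Lemma~\ref{paths}, to define $u(x):=\int_\Omega G(x,y)\,d\mu(y)$, and to read off both conclusions from two features of $G$: an exact distributional divergence and a pointwise size bound matching $w_0$. I would fix once and for all a mollifier $\theta\in\calD(B(x_0,1))$ with $\theta\geq0$ and $\int\theta\,dm=1$ (legitimate since $\overline{B(x_0,1)}\subseteq\Omega$ by \eqref{dx015}). The kernel $G(\cdot,y)$ is then the vector field realizing the transport of a unit mass from $y$ to the density $\theta$: following $\gamma_y$ from $y$ towards $x_0$ through the Whitney cubes it crosses, one chains the elementary Bogovskii operators associated with consecutive cubes and finishes with a terminal operator spreading the mass as $\theta$ near $x_0$. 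This construction is designed to yield the two properties:
\begin{enumerate}
\item[(P1)] for every $\varphi\in\calD(\R^n)$, $\displaystyle\int_\Omega G(x,y)\cdot\nabla\varphi(x)\,dx=-\varphi(y)+\int_\Omega\varphi\,\theta\,dm$;
\item[(P2)] $\displaystyle |G(x,y)|\lesssim |x-y|^{1-n}+\hat{d}(x)^{1-n}\,\1\big[\exists\,t\in[0,1]:\ |\gamma(t,y)-x|\leq\tfrac12\hat{d}(x)\big]$,
\end{enumerate}
with implicit constants depending only on $n$ and on the path system (hence on $\Omega$ and $\gamma$, but \emph{not} on $\mu_0$ or $\mu$, which is what yields the uniformity of $C$ in Theorem~\ref{main}). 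Here (P1) encodes $\diver_x G(\cdot,y)=\delta_y-\theta$ \emph{with no boundary charge}: since the whole transport stays inside $\hat{\Omega}$ and the only sources are the interior objects $\delta_y$ and $\theta$, the identity persists for test functions allowed to be nonzero on $\partial\Omega$. The singular scale $\hat{d}(x)^{1-n}$ in (P2) is the reciprocal cross-sectional area of the transport tube of width $\asymp\hat{d}$, which renders $G(\cdot,y)$ absolutely continuous, while the local term $|x-y|^{1-n}$ comes from the Bogovskii singularity inside the starting cube.

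Granting such a $G$, property (ii) is immediate: using $|\mu|\leq\kappa\mu_0$ and (P2),
\[
|u(x)|\leq\int_\Omega|G(x,y)|\,d|\mu|(y)\leq\kappa\int_\Omega|G(x,y)|\,d\mu_0(y)\lesssim\kappa\Big(I_1\mu_0(x)+\omega(x)\hat{d}(x)^{1-n}\Big)=\kappa\,w_0(x),
\]
the two terms of $w_0$ arising directly from the two terms of (P2) together with \eqref{defomega}. In particular the defining integral of $u$ converges absolutely for a.e.\ $x$ (since $w_0\in L^1$ forces $w_0<\infty$ a.e.), so $u$ is well defined and Fubini is licit below. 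For (i), fix $\varphi\in\calD(\R^n)$; by Fubini (measurability of $(x,y)\mapsto G(x,y)$ following from Lemma~\ref{paths}$(b)$) and (P1),
\[
\int_\Omega u\cdot\nabla\varphi=\int_\Omega\Big(\int_\Omega G(x,y)\cdot\nabla\varphi(x)\,dx\Big)d\mu(y)=\int_\Omega\Big(-\varphi(y)+\int_\Omega\varphi\,\theta\Big)d\mu(y)=-\int_\Omega\varphi\,d\mu+\mu(\Omega)\int_\Omega\varphi\,\theta,
\]
and the last term vanishes because $\mu(\Omega)=0$, giving \eqref{eq.form-faible}.

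It remains to prove (A). Positivity is clear, as $w_0(x)\geq I_1\mu_0(x)=\int_\Omega|x-y|^{1-n}\,d\mu_0(y)>0$ for every $x$ since $\mu_0$ is nontrivial. For integrability I would treat the two summands by Tonelli. For the first, $\int_\Omega I_1\mu_0\,dm=\int_\Omega\big(\int_\Omega|x-y|^{1-n}\,dx\big)d\mu_0(y)\leq c_n\,\diam(\Omega)\,\mu_0(\Omega)<\infty$, the inner integral being bounded by $\int_{B(y,\diam\Omega)}|x-y|^{1-n}\,dx=c_n\diam(\Omega)$. For the second, Tonelli and \eqref{defomega} give
\[
\int_\Omega\omega(x)\hat{d}(x)^{1-n}\,dm(x)=\int_\Omega J(y)\,d\mu_0(y),\qquad J(y):=\int_\Omega\hat{d}(x)^{1-n}\,\1\big[\gamma_y\cap\overline{B}(x,\tfrac12\hat{d}(x))\neq\emptyset\big]\,dx,
\]
and the geometric estimate $J(y)\lesssim l(\gamma_y)\lesssim d_{\hat{\Omega}}(y)$ holds: writing $\hat{d}(x)\asymp\hat{d}(p)$ for the nearest $p\in\gamma_y$ (as $\hat{d}$ is $1$-Lipschitz), a dyadic decomposition of the integration domain according to the size of $\hat{d}(x)$, combined with the Ahlfors bound \eqref{Ahlfors} and the length bound \eqref{length}, controls $J(y)$ by $l(\gamma_y)\lesssim d_{\hat{\Omega}}(y)$, exactly as in the weight estimate of \cite{DMRT}. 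Hence $\int_\Omega\omega\hat{d}^{1-n}\,dm\lesssim\int_\Omega d_{\hat{\Omega}}\,d\mu_0<\infty$ by \eqref{dOmega}, so that $w_0\in L^1(\Omega)$.

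The main obstacle is the construction of $G$ together with (P1)--(P2): one must chain the local Bogovskii operators along the Whitney cubes crossed by $\gamma_y$ so that the resulting field is genuinely absolutely continuous with the tube bound (P2), while its divergence telescopes to $\delta_y-\theta$ with \emph{no} boundary contribution---this last point being precisely what allows test functions in $\calD(\R^n)$, and not merely in $\calD(\hat{\Omega})$, already at this stage. The secondary technical point is the covering estimate $J(y)\lesssim l(\gamma_y)$, which is where property $(c)$ of Lemma~\ref{paths} is genuinely used.
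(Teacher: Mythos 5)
Your reduction of the Proposition to the two kernel properties (P1)--(P2) is exactly the paper's strategy, and everything you derive from them is sound: the estimate (ii) follows from (P2) and \eqref{defomega} precisely as in the paper's Lemma~\ref{Gxy}, the weak formulation (i) follows by Fubini (licit since $\int_\Omega|G(x,y)|\,d|\mu|(y)\lesssim\kappa w_0(x)$ with $w_0\in L^1$) together with $\mu(\Omega)=0$, matching \eqref{muphi}, and your part (A) is correct~---~your direct Tonelli bound for $\int_\Omega I_1\mu_0$ is even a little simpler than the paper's duality argument (the paper proves the stronger $L^p$ statement, $p<\tfrac{n}{n-1}$, because it is needed later in Section~\ref{sec.bdry}), while your estimate $J(y)\lesssim l(\gamma_y)\lesssim d_{\hat{\Omega}}(y)$ is the content of the paper's Lemma~\ref{omega}, likewise deferred to \cite[Lemma 2.3]{DMRT}. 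The problem is that the existence of a kernel $G$ satisfying (P1) and (P2) \emph{is} the Proposition, up to these routine steps, and you explicitly leave it unproved (``the main obstacle''). A proof that defers its only nontrivial ingredient to an unestablished claim has a genuine gap, even though the claim is true.

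For comparison, here is how the paper fills that gap, and why your sketched route is not a drop-in replacement. The paper does not chain local Bogovskii operators over Whitney cubes (that is closer to \cite{adm}); it writes one explicit kernel \eqref{GG}, a Bogovskii formula curved along $\gamma_y$, where the tube radius $\rho(t,y)$ is defined in two regimes: case 1, where $\gamma_y$ exits $B\bigl(y,\tfrac12\hat{d}(y)\bigr)$ at time $\tau(y)$ (radius proportional to $|y-\gamma(t,y)|$ before $\tau(y)$, to $\hat{d}(\gamma(t,y))$ after), and the degenerate case 2 where it never does, for which $\rho(t,y):=t$ and a separate explicit computation gives $|G(x,y)|\lesssim|x-y|^{1-n}$~---~a case your cube-chaining picture does not obviously cover, since there is no nontrivial chain of cubes to follow. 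Property (P1) with test functions in $\calD(\R^n)$, i.e.\ the absence of boundary charge, is not automatic from ``the transport stays inside'': it rests on Claim~\ref{claim.gamma}, namely $\rho(t,y)\leq\tfrac15\hat{d}(\gamma(t,y))$ (using the normalization \eqref{dx015}), which guarantees that the change of variables $x=\gamma(t,y)+\rho(t,y)(z-x_0)$ lands in $\hat{\Omega}$, together with $m(\hat{\Omega}\setminus\Omega)=0$; the same claim produces the indicator in your (P2), since $|x-\gamma(t,y)|\leq\rho(t,y)$ forces $|x-\gamma(t,y)|\leq\tfrac12\hat{d}(x)$. Finally, (P2) itself is obtained by splitting $G=G_1+G_2$ at $t=\tau(y)$: the bound $|G_1(x,y)|\lesssim|x-y|^{1-n}$ and the bound $|G_2(x,y)|\lesssim\hat{d}(x)^{1-n}$ (via the Ahlfors property \eqref{Ahlfors} applied to $l(\gamma_y\cap B(x,\tfrac12\hat{d}(x)))$, after noting $\hat{d}(\gamma(t,y))\asymp\hat{d}(x)$ on the support) require the quantitative structure of $\rho$, including its $t$-derivative entering \eqref{GG}; in a cube-by-cube chaining you would instead need a bounded-overlap argument for the infinitely many chains crossing a given cube, which is again nontrivial and is exactly where \eqref{Ahlfors} would reappear. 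In short: correct reduction, correct treatment of (A), but the constructive core that the statement asks for is missing.
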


Before to prove the above proposition, let us start by an example illustrating how working in a suitable measurable cover of $\Omega$ can change drastically the behavior of the weight $w_0$ constructed above.

\begin{figure}[h] \label{figurecarre}
\begin{center}
\definecolor{wqwqwq}{rgb}{0.3764705882352941,0.3764705882352941,0.3764705882352941}
\definecolor{qqqqff}{rgb}{0.,0.,1.}
\definecolor{ududff}{rgb}{0.30196078431372547,0.30196078431372547,1.}
\definecolor{ffqqqq}{rgb}{1.,0.,0.}
\definecolor{ttffqq}{rgb}{0.2,1.,0.}
\begin{tikzpicture}[line cap=round,line join=round,>=triangle 45,x=8.0cm,y=8.0cm]
\draw[->,color=black] (-0.1,0.) -- (1.1,0.);
\foreach \x in {,0.2,0.4,0.6,0.8,1.}
\draw[shift={(\x,0)},color=black] (0pt,2pt) -- (0pt,-2pt) node[below] {\footnotesize $\x$};
\draw[->,color=black] (0.,-0.1) -- (0.,1.1);
\foreach \y in {,0.2,0.4,0.6,0.8,1.}
\draw[shift={(0,\y)},color=black] (2pt,0pt) -- (-2pt,0pt) node[left] {\footnotesize $\y$};
\draw[color=black] (0pt,-10pt) node[right] {\footnotesize $0$};
\clip(-0.1,-0.1) rectangle (1.1,1.1);
\fill[line width=2.pt,color=ttffqq,fill=ttffqq,fill opacity=0.10000000149011612] (0.,0.) -- (1.,0.) -- (1.,1.) -- (0.,1.) -- cycle;
\fill[line width=1.2pt,dotted,color=wqwqwq,fill=wqwqwq,pattern=north east lines,pattern color=wqwqwq] (0.1,0.265) -- (0.9,0.265) -- (0.9,0.365) -- (0.1,0.365) -- cycle;
\draw [line width=2.pt,color=ffqqqq] (0.,0.42)-- (0.9,0.42);
\draw [line width=2.pt,color=ffqqqq] (0.1,0.21)-- (1.,0.21);
\draw [line width=2.pt,color=ffqqqq] (0.,0.1)-- (0.9,0.1);
\draw [line width=2.pt,color=ffqqqq] (0.,0.9)-- (0.9,0.9);
\draw [line width=2.pt,color=ffqqqq] (0.1,0.65)-- (1.,0.65);
\draw [line width=0.8pt,dash pattern=on 1pt off 1pt] (0.21152,0.9517)-- (0.95,0.95)-- (0.95,0.775)-- (0.05,0.775)-- (0.05,0.535)-- (0.95,0.535)-- (0.95,0.315)-- (0.05,0.315)-- (0.05,0.155)-- (0.95,0.155)-- (0.95,0.05);
\draw [line width=2.pt,color=qqqqff] (0.29,0.275)-- (0.29,0.315)-- (0.95,0.315)-- (0.95,0.535)-- node[below] {$\gamma_{(x,y)}$} (0.05,0.535)-- (0.05,0.775)-- (0.95,0.775)-- (0.95,0.95)-- (0.21152,0.9517) ;
\draw [line width=1.2pt,dotted,color=wqwqwq] (0.1,0.265)-- (0.9,0.265);
\draw [line width=1.2pt,dotted,color=wqwqwq] (0.9,0.265)-- (0.9,0.365);
\draw [line width=1.2pt,dotted,color=wqwqwq] (0.9,0.365)-- (0.1,0.365);
\draw [line width=1.2pt,dotted,color=wqwqwq] (0.1,0.365)-- (0.1,0.265);
\begin{scriptsize}
\draw[color=ffqqqq] (0.29949935779458353,0.45351047575946485) node {$L_{k-1}$};
\draw[color=ffqqqq] (0.7627437044897136,0.2278273324977353) node {$L_k$};
\draw[color=ffqqqq] (0.2971237457602495,0.054407653991353666) node {$L_{k+1}$};
\draw[color=ffqqqq] (0.6629679990476856,0.859740133630578) node {$L_0$};
\draw[color=ffqqqq] (0.7603680924553796,0.6768180069868603) node {$L_1$};
\draw [fill=ududff] (0.21152,0.9517) circle (1.5pt);
\draw[color=ududff] (0.11182600708219753,0.9500133909352697) node {$(x_0,y_0)$};
\draw [fill=black] (0.95,0.155) circle (0.5pt);
\draw[color=black] (0.9563560852879345,0.027088115596512724) node {$\Gamma$};
\draw [fill=ududff] (0.29,0.275) circle (1.5pt);
\draw[color=ududff] (0.3315701202580925,0.2290151385149023) node {$(x,y)$};
\draw [fill=ududff] (0.29,0.315) circle (1.0pt);
\draw[color=wqwqwq] (0.5465630093653195,0.384617726763779) node {$Q_k$};
\end{scriptsize}
\end{tikzpicture}
\caption{The set $\Omega:=(0,1)^2\setminus\bigcup_{k\in\N} L_k$ from Example~\ref{ex.carre}}
\end{center}
\end{figure}
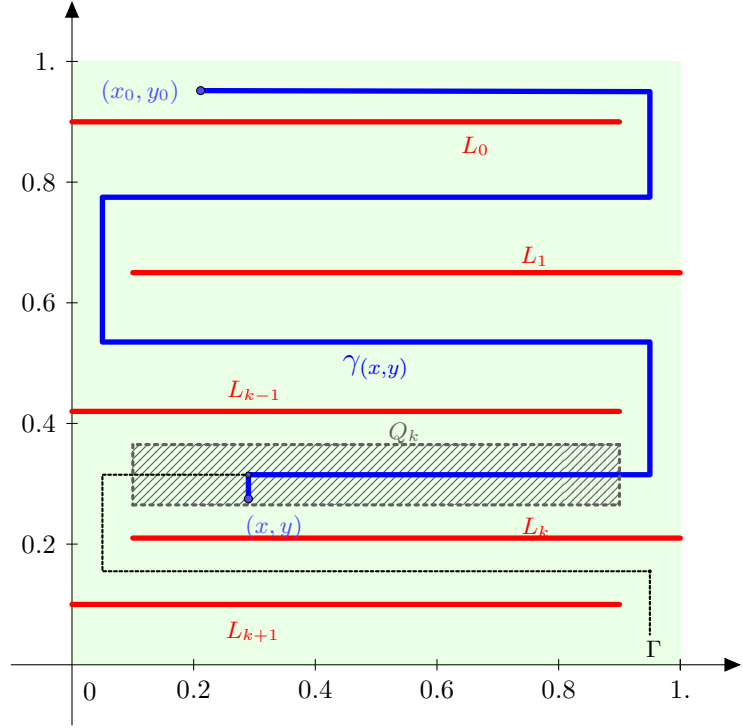

\begin{Example}\label{ex.carre}
Pick up a sequence $(h_k)_{k\in\N}\subseteq (0,1)$ strictly decreasing to $0$, let $\epsilon>0$ be small, and let for $k\in\N$:
$$
L_k:=\begin{cases} [0,1-\epsilon]\times\{h_k\}&\text{ if }k\text{ is even,}\\
[\epsilon,1]\times\{h_k\}&\text{ if }k\text{ is odd.}\end{cases}
$$
Define $\Omega:=(0,1)^2\setminus\bigcup_{k\in\N} L_k$. For all $(x,y)\in \Omega$, consider the path $\gamma_{(x,y)}$ drawn in the above picture and observe that this family of paths satisfies all conditions stated in Lemma~\ref{paths}. For this example, take for $\mu_0$ the (two-dimensional) Lebesgue measure. Fix $(x_0,y_0)$ as on the picture (with $y_0>h_0$ and, say, $0<x_0$ small), denote by $d_\Omega(x,y)$ the geodesic distance from $(x,y)$ to $(x_0,y_0)$ in $\Omega$, and let $d(x,y)$ be the distance from $(x,y)$ to the boundary of $\Omega$. \par
\noindent If $(x,y)\in \Omega$, there exists $k\in \N$ such that $h_{k+1}<y\leq h_k$. It is plain to see that 
$$
d_{\Omega}(x,y)\lesssim k(1-\epsilon).
$$
It follows that
\begin{eqnarray*}
\int_{\Omega} d_{\Omega}(x,y)dxdy & = & \sum_{k\in \N} \int_{h_{k+1}<y\leq h_k} d_{\Omega}(x,y)dxdy \\
& \lesssim & (1-\epsilon) \sum_{k\in \N} k(h_k-h_{k+1}),
\end{eqnarray*}
which entails that $d_{\Omega}\in L^1(\Omega)$ (for the Lebesgue measure) provided that 
\begin{equation} \label{cond1}
\sum_{k\in \N} k(h_k-h_{k+1})<+\infty.
\end{equation}
For all $k$, denote by $Q_k$ the set of points $(x,y)\in \Omega$ such that $\epsilon<x<1-\epsilon$ and 
$$
\left\vert y-\frac{h_k+h_{k+1}}2\right\vert\leq \eta (h_k-h_{k+1})
$$
for $\eta>0$ small enough (and independent from $k$). It is clear that, for all $(x,y)\in Q_k$, one has:
$$
d(x,y)\leq h_k-h_{k+1}.
$$
Moreover, for all $(\tilde{x},\tilde{y})\in \Omega$ such that $\epsilon<\tilde{x}<1-\epsilon$ and $\tilde{y}\leq h_{k}$, the path $\gamma_{(\tilde{x},\tilde{y})}$ intersects the ball $B\left((x,y),\frac 12 d(x,y)\right)$. It follows that:
$$
\omega(x,y)\geq h_k(1-2\epsilon).
$$
As a consequence,
\begin{equation} \label{cond2}
w(x,y)\gtrsim \frac{h_k(1-2\epsilon)}{h_k-h_{k+1}}=\frac{1-2\epsilon}{1-\frac{h_{k+1}}{h_k}}.
\end{equation}
Choosing, for instance, $h_k:=\frac 1{(k+1)^3}$, it is obvious that \eqref{cond1} holds and \eqref{cond2} shows that $w$ is unbounded in $\Omega$. \par
\noindent If, instead of $\Omega$, we consider now $\tilde{\Omega}:=(0,1)^2$, which is obviously a measurable cover of $\Omega$, the associated weight, denoted $\tilde{w}$, satisfies $\tilde{d}(x,y)\lesssim\tilde{w}(x,y)\lesssim 1+ \tilde{d}(x,y)$, and is therefore bounded in $\Omega$, where $\tilde{d}$ denotes the distance to the boundary of $\tilde{\Omega}$.
\end{Example}

The proof of Proposition \ref{solutioninfty} relies on several lemmata. The first one is an easy observation about $w_0$.
\begin{Lemma} \label{propI1mu0}
Let $w_0$ be defined as before.
\begin{enumerate}
\item For every $x\in \Omega$, $I_1\mu_0(x)>0$.
\item For all $p\in [1,\frac{n}{n-1})$, $I_1\mu_0\in L^p(\Omega)$. In particular, $I_1\mu_0(x)<+\infty$ for almost every $x\in \Omega$.
\end{enumerate}
\end{Lemma}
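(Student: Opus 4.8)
The plan is to handle the two assertions separately, as they are of rather different natures. For part (1), the positivity of $I_1\mu_0(x)=\int_\Omega |x-y|^{-n+1}\,d\mu_0(y)$ at every fixed $x\in\Omega$ follows immediately from the fact that the integrand is strictly positive $\mu_0$-a.e.\ (the only problematic point being $y=x$, where the integrand is $+\infty$, which only helps) together with the standing hypothesis that $\mu_0$ is nontrivial, i.e.\ $\mu_0(\Omega)>0$. Indeed, the integrand is a positive measurable function and $\mu_0$ is a positive measure of positive total mass, so its integral is strictly positive; there is nothing delicate here.

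The substance of the lemma lies in part (2), the claim that $I_1\mu_0\in L^p(\Omega)$ for every $p\in[1,\frac{n}{n-1})$. First I would reduce to a uniform bound on the $L^p$-norm by computing $\int_\Omega I_1\mu_0(x)^p\,dx$ and exchanging the order of integration. The natural route is to appeal to Minkowski's integral inequality: writing $I_1\mu_0(x)=\int_\Omega k(x,y)\,d\mu_0(y)$ with kernel $k(x,y)=|x-y|^{-n+1}$, Minkowski's inequality gives
\begin{equation}\label{eq.mink}
\|I_1\mu_0\|_{L^p(\Omega)}\leq \int_\Omega \Big(\int_\Omega |x-y|^{(-n+1)p}\,dx\Big)^{1/p}\,d\mu_0(y).
\end{equation}
The inner integral is then estimated uniformly in $y$: since $\Omega$ is bounded, say $\Omega\subseteq B(y,R)$ for $R=\diam\Omega$, one has $\int_\Omega |x-y|^{(-n+1)p}\,dx\leq \int_{B(y,R)}|x-y|^{(-n+1)p}\,dx$, and this last integral is finite precisely when $(n-1)p<n$, that is, when $p<\frac{n}{n-1}$. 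A polar-coordinates computation gives the explicit bound $C_{n,p}R^{n-(n-1)p}$, which is a constant independent of $y$. Plugging this into \eqref{eq.mink} yields $\|I_1\mu_0\|_{L^p(\Omega)}\leq C_{n,p}R^{(n-(n-1)p)/p}\,\mu_0(\Omega)<+\infty$, using the finiteness of $\mu_0$. The final assertion that $I_1\mu_0(x)<+\infty$ for a.e.\ $x$ is an immediate consequence of $I_1\mu_0\in L^1(\Omega)$ (take $p=1$, which is allowed since $1<\frac{n}{n-1}$ for $n\geq 2$; for $n=1$ the kernel is $|x-y|^0\equiv 1$ and the statement is trivial).

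The only step requiring any care is the justification of the integrability condition $p<\frac{n}{n-1}$ for the radial singularity, and the interchange of integrals; I do not expect a genuine obstacle, since $k\geq 0$ allows Tonelli's theorem to legitimize every exchange and Minkowski's integral inequality applies verbatim to the positive kernel. Thus the main point is really the sharp exponent bookkeeping in the polar-coordinate estimate of $\int_{B(y,R)}|x-y|^{(-n+1)p}\,dx$, which is routine.
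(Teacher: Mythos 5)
Your proof is correct and takes essentially the same route as the paper: the paper bounds $\|I_1\mu_0\|_{L^p(\Omega)}$ by duality (H\"older against $g\in L^{p'}(\Omega)$ with $\|g\|_{p'}=1$, plus Fubini), which is precisely the standard proof of the Minkowski integral inequality you invoke, and both arguments reduce to the same uniform-in-$y$ estimate $\int_\Omega |x-y|^{(1-n)p}\,dx\lesssim (\diam\Omega)^{n-(n-1)p}<+\infty$ for $p<\frac{n}{n-1}$, integrated against the finite measure $\mu_0$. Part (1) is handled identically in both.
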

\begin{proof} [Proof of Lemma \ref{propI1mu0}] That one has $I_1\mu_0(x)>0$ for every $x\in\Omega$ follows at once from its definition and the fact that $\mu_0$ is non trivial. Let now $p\in [1,\frac{n}{n-1})$, $p^{\prime}$ defined by $\frac 1p+\frac 1{p^{\prime}}=1$ and $g\in L^{p^{\prime}}(\Omega)$ with $\left\Vert g\right\Vert_{p^{\prime}}=1$. Using H\"older's inequality and Fubini's theorem, one obtains:
\begin{multline*}
\left|\int_\Omega I_1\mu_0(x)g(x)\,dx\right|=\left|\int_\Omega \left(\int_\Omega |x-y|^{1-n} g(x)\,dx\right)\,d\mu_0(y)\right|\\ \lesssim \left(\int_{B(0,2\diam\Omega)} \left\vert z\right\vert^{p(1-n)}dx\right)^{\frac 1p}\mu_0(\Omega),
\end{multline*} 
which ends the proof. \end{proof}

\noindent A second preparatory lemma provides the integrability of $\omega$ against some power of the distance function to $\R^n\setminus\hat{\Omega}$.
\begin{Lemma} \label{omega}
 \begin{equation}\label{intomega}
    \int_\Omega \omega(x) \hat{d}(x)^{-n+1} dx < + \infty.
    \end{equation}
\end{Lemma}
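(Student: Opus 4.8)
The plan is to bound the integral by Tonelli's theorem, reducing it to a purely geometric estimate on each path $\gamma_y$, and then to integrate the resulting bound against $\mu_0$ using \eqref{length} and hypothesis \eqref{dOmega}. Writing $d(x,\gamma_y):=\inf_{t\in[0,1]}|\gamma(t,y)-x|$, the defining set in \eqref{defomega} is exactly $\{y\in\Omega:\ d(x,\gamma_y)\le\frac12\hat{d}(x)\}$, so that
\[
\int_\Omega\omega(x)\hat{d}(x)^{-n+1}\,dx=\int_\Omega\left(\int_\Omega \ind_{\{d(x,\gamma_y)\le\frac12\hat{d}(x)\}}(x)\,\hat{d}(x)^{-n+1}\,dx\right)d\mu_0(y).
\]
The interchange is legitimate once one checks that $(x,y)\mapsto d(x,\gamma_y)$ is jointly measurable: for fixed $y$ it is $1$-Lipschitz in $x$, while for fixed $x$ it equals $\inf_{t\in\Q\cap[0,1]}|\gamma(t,y)-x|$ by continuity of $\gamma_y$, a countable infimum of functions measurable in $y$ by property~$(b)$ of Lemma~\ref{paths}. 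Since $\hat{d}$ is continuous the integrand is nonnegative and measurable, and Tonelli applies (this also shows $\omega$ is measurable, as implicitly claimed).

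The heart of the matter is the inner estimate $\int_\Omega \ind_{T_y}(x)\hat{d}(x)^{-n+1}\,dx\lesssim l(\gamma_y)+\diam\hat{\Omega}$, where $T_y:=\{x:\ d(x,\gamma_y)\le\frac12\hat{d}(x)\}$. I would prove it using a Whitney decomposition $\{Q\}$ of $\hat{\Omega}$, on each cube of which $\hat{d}(x)\approx\ell_Q$ (here $\ell_Q$ is the side length). Since $\hat{d}\ge\ell_Q$ on $Q$ and $1-n\le 0$, the contribution of each cube is at most $\ell_Q^{1-n}m(Q)=\ell_Q^{1-n}\ell_Q^{n}=\ell_Q$, whence the left-hand side is $\lesssim\sum_{Q}\ell_Q$, summed over the cubes meeting $T_y$. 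For such a cube, choosing $x\in Q\cap T_y$ and a path point $z=\gamma(t,y)$ with $|x-z|\le\frac12\hat{d}(x)$, the bound $|\hat{d}(x)-\hat{d}(z)|\le|x-z|$ gives $\hat{d}(z)\approx\hat{d}(x)\approx\ell_Q$ and $|x-z|\lesssim\ell_Q$, so $\gamma_y$ meets a fixed dilate $CQ$. It therefore remains to establish the scale-localized packing bound $\sum_{Q:\,\gamma_y\cap CQ\neq\emptyset}\ell_Q\lesssim l(\gamma_y)+\diam\hat{\Omega}$.

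This packing bound is the main obstacle: the per-cube estimate $\ell_Q^{1-n}m(Q)\lesssim\ell_Q$ holds at \emph{every} Whitney scale, so naively summing over all scales diverges, and one must charge each $\ell_Q$ to genuine length of $\gamma_y$ near the scale $\ell_Q$. For cubes with $\ell_Q\le l(\gamma_y)$ I would exploit the connectedness of $\gamma_y$: if $z\in\gamma_y\cap CQ$ and $\gamma_y\not\subseteq B(z,c\ell_Q)$, the standard fact that a connected set joining the centre of a ball to its complement has length at least the radius gives $l(\gamma_y\cap B(z,c\ell_Q))\ge c\ell_Q$; as $B(z,c\ell_Q)$ lies in a further fixed dilate $C'Q$ and dilated Whitney cubes have bounded overlap, summation yields $\sum_{\ell_Q\le l(\gamma_y)}\ell_Q\lesssim\sum_Q l(\gamma_y\cap C'Q)\lesssim l(\gamma_y)$. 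For the remaining scales $\ell_Q>l(\gamma_y)$ (all satisfying $\ell_Q\le C\diam\hat{\Omega}$, since $z\in\gamma_y\subseteq\hat{\Omega}$), the curve has diameter $\le l(\gamma_y)<\ell_Q$, hence lies in a ball of radius $\lesssim\ell_Q$ meeting only $O(1)$ cubes of that side; these contribute a geometric sum $\lesssim\diam\hat{\Omega}$.

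Finally, inserting the inner bound and using \eqref{length}, which gives $l(\gamma_y)\lesssim d_{\hat{\Omega}}(y)$, together with the finiteness of $\mu_0$, one obtains
\[
\int_\Omega\omega(x)\hat{d}(x)^{-n+1}\,dx\lesssim\int_\Omega\big(d_{\hat{\Omega}}(y)+\diam\hat{\Omega}\big)\,d\mu_0(y)=\|d_{\hat{\Omega}}\|_{L^1(\mu_0)}+\mu_0(\Omega)\,\diam\hat{\Omega},
\]
which is finite by \eqref{dOmega}, completing the proof. (Note that the Ahlfors-type upper bound \eqref{Ahlfors} is not needed here; the relevant geometric input is the matching \emph{lower} bound on length supplied by connectedness.)
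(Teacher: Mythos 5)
Your overall scheme (Tonelli, then a length lower bound coming from connectedness, then summation with bounded overlap) is sound and is in fact the same mechanism as the paper's, which follows \cite[Lemma 2.3]{DMRT}; your Tonelli reduction, measurability check, per-cube estimate, and large-scale ($\ell_Q>l(\gamma_y)$) case are all fine. But the step you yourself call the main obstacle rests on a false claim: dilated Whitney cubes $C'Q$ do \emph{not} have bounded overlap when the dilation factor is large, and your argument forces it to be large. Since on a Whitney cube one only knows $\hat{d}(x)\leq 5\sqrt{n}\,\ell_Q$, capturing a path point $z$ with $|x-z|\leq\frac12\hat{d}(x)$ already requires $C\geq 1+5\sqrt{n}$, and $C'$ is larger still; for such factors the sets $C'Q$ protrude past the boundary of $\hat{\Omega}$, and bounded overlap fails. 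Concretely, in the upper half-plane a point at height $\epsilon$ sitting below a vertical tower of Whitney cubes belongs to $C'Q$ for one cube of \emph{every} dyadic scale between roughly $\epsilon$ and $1$, so the overlap is of order $\log(1/\epsilon)$. Hence the inequality $\sum_Q l(\gamma_y\cap C'Q)\lesssim l(\gamma_y)$ does not follow as you justify it; this is precisely the cross-scale difficulty you identified, and it is not resolved by the appeal to dilated cubes.

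The gap is repairable using a fact you proved and then set aside: your witness point satisfies $\hat{d}(z_Q)\approx\ell_Q$. Take $c$ small (say $c=\sqrt{n}/4$) and run the overlap argument on the balls $B_Q:=B(z_Q,c\ell_Q)$ rather than on $C'Q$: every $w\in B_Q$ then has $\hat{d}(w)\approx\ell_Q$ and $\dist(w,Q)\lesssim\ell_Q$, so for a fixed $w$ the side $\ell_Q$ of any cube contributing a ball containing $w$ is pinned within a bounded ratio of $\hat{d}(w)$, and $Q$ must lie within distance $\lesssim\hat{d}(w)$ of $w$; disjointness of Whitney cubes then bounds the number of such cubes by a dimensional constant, i.e.\ the \emph{balls} $B_Q$ have bounded overlap, and summing $\ell_Q\lesssim l(\gamma_y\cap B_Q)$ gives your packing bound. (You should also treat the omitted alternative $\gamma_y\subseteq B(z_Q,c\ell_Q)$ in the connectedness dichotomy; it is trivial, since then $l(\gamma_y\cap B_Q)=l(\gamma_y)\geq\ell_Q$ in the regime $\ell_Q\leq l(\gamma_y)$.) Note that this depth comparability is exactly how the paper's proof works: after establishing the same length lower bound (via the normalization $\hat{d}(x_0)\geq 15$, which forces the path to exit the small ball), it applies, following \cite{DMRT}, Fubini a second time against the arclength measure of $\gamma_y$ and uses $\hat{d}(x)\approx\hat{d}(z)$, $|x-z|\lesssim\hat{d}(z)$ to see that each curve point is charged $O(1)$ — no Whitney decomposition at all. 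Your closing remark that \eqref{Ahlfors} is not needed is correct; the relevant input is indeed the lower bound on length, as in the paper.
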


\begin{proof}
We follow the proof of \cite[Lemma 2.3]{DMRT}, indicating only the main differences. Let $x\in \Omega$. If $\hat{d}(x)\geq \frac{60}7$, then one has $\omega(x)\hat{d}(x)^{-n+1}\leq C<+\infty$. We can therefore assume that $\hat{d}(x)<\frac{60}7$. In this case, let $y\in \Omega$ be such that there exists $t_0$ satisfying $\left\vert \gamma(t_0,y)-x\right\vert \leq \frac 12 \hat{d}(x)$.

Since we also have $15-\left\vert x-x_0\right\vert\leq \hat{d}(x_0)-\left\vert x-x_0\right\vert\leq \hat{d}(x)$ (recall that \eqref{dx015} holds), we obtain:
\begin{eqnarray*}
\left\vert \gamma(t_0,y)-x_0\right\vert & \geq & \left\vert x-x_0\right\vert - \left\vert x-\gamma(t_0,y)\right\vert \\
& \geq & \left\vert x-x_0\right\vert -\frac 12\hat{d}(x)\\
& \geq & 15-\frac 32 \hat{d}(x)\\
& \geq & \frac 74 \hat{d}(x)-\frac 32 \hat{d}(x)=\frac 14 \hat{d}(x).
\end{eqnarray*}
This implies that $x_0\notin B(\gamma(t_0,y),\frac 14 \hat{d}(x))$ and hence that one has: $$\frac 14 \hat{d}(x)\leq l(\gamma_y\cap B(\gamma(t_0,y),\frac 14 \hat{d}(x)))\ ;$$ the rest of the proof is then virtually identical to that of \cite[Lemma~2.3]{DMRT}.
\end{proof}

\medskip

\begin{proof}[Proof of Proposition~\ref{solutioninfty}] Observe that (A) is an immediate consequence of Lemmas~\ref{propI1mu0} and \ref{omega}.

We give a constructive proof of (B) relying on ideas going back to Bogovskii \cite{bog}, which was extended to John domains in \cite{adm} and was further generalized to arbitrary domains in \cite{DMRT}. We here adapt to our context arguments from the proof of \cite[Lemma~2.4]{DMRT}, completing them at some specific points.

\par

\medskip 
Let from now on $B_0:=B(x_0,1)$; it is clear by (\ref{dx015}) that one has $\overline{B_0}\subseteq\Omega$. We choose a function $\chi\in {\mathcal D}(\Omega)$ supported in $\overline{B_0}$ and such that $\int_{\Omega} \chi(x)dx=1$. For each $y\in \Omega\setminus\{x_0\}$, let $\tau(y)$ be the smallest $t>0$ such that $\gamma(t,y) \in \partial B(y, \frac12 \hat{d}(y))$ in case there exists a $t\in [0,1]$ for which one has $\gamma(t,y)\in \partial B(y,\frac 12 \hat{d}(y))$~---~call this ``case 1''~---~, and let $\tau(y) = 1$ otherwise~---~call this ``case 2''. We define a function $t\mapsto \rho(t,y)$, $t\in [0,1]$, by letting, in case 1:
$$
\begin{array}{ll}
\rho(t,y)=\alpha \left\vert y - \gamma(t,y) \right\vert &\mbox{ if }t\leq \tau(y),\\
\rho(t,y)=\frac 1{\hat{d}(x_0)} \hat{d}(\gamma(t,y)) &\mbox{ if }t>\tau(y),
\end{array}
$$
where $\alpha$ is so chosen that $\rho(\cdot, y)$ is a continuous function~---~this means that we have to take
$$
\alpha = \frac{2}{\hat{d}(x_0)} \frac{\hat{d}(\gamma(\tau(y),y))}{\hat{d}(y)}.
$$
In case 2, we let, for $0\leq t\leq 1$:
$$
\rho(t,y):= t.
$$

\begin{Claim}\label{claim.gamma} For all $t\in [0,1]$ and all $z\in B_0$, we have $\gamma(t,y)+\rho(t,y)(z-x_0) \in \hat{\Omega}$.\end{Claim}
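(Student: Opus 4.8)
The plan is to reduce the claim to the single scalar inequality $\rho(t,y)\leq \hat{d}(\gamma(t,y))$ for every $t\in[0,1]$. Indeed, any $z\in B_0=B(x_0,1)$ satisfies $|z-x_0|<1$, so the displacement obeys $|\rho(t,y)(z-x_0)|=\rho(t,y)|z-x_0|<\rho(t,y)$ whenever $\rho(t,y)>0$ (and the displacement is $0$ otherwise, in which case the point is $\gamma(t,y)\in\hat{\Omega}$). Combined with $\rho(t,y)\leq\hat{d}(\gamma(t,y))$ this gives $|\rho(t,y)(z-x_0)|<\hat{d}(\gamma(t,y))=d(\gamma(t,y),\R^n\setminus\hat{\Omega})$, so that $\gamma(t,y)+\rho(t,y)(z-x_0)$ lies in the open ball $B(\gamma(t,y),\hat{d}(\gamma(t,y)))\subseteq\hat{\Omega}$, which is exactly the conclusion. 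It remains only to verify the scalar inequality, and I would do this by following the three regimes in the definition of $\rho$.

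For the regime $t>\tau(y)$ (in case 1), one has $\rho(t,y)=\hat{d}(\gamma(t,y))/\hat{d}(x_0)$, and the inequality is immediate from $\hat{d}(x_0)\geq 15\geq 1$ by \eqref{dx015}. For the regime $t\leq\tau(y)$ (in case 1), I would first record that the minimality of $\tau(y)$ forces $|\gamma(t,y)-y|\leq\frac12\hat{d}(y)$ there; in particular, at $t=\tau(y)$ the point sits on $\partial B(y,\frac12\hat{d}(y))$, so the $1$-Lipschitz property of $\hat{d}$ yields $\hat{d}(\gamma(\tau(y),y))\leq\frac32\hat{d}(y)$ and hence $\alpha\leq 3/\hat{d}(x_0)\leq\frac15$. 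Writing $s:=|\gamma(t,y)-y|\leq\frac12\hat{d}(y)$, the desired bound $\rho(t,y)=\alpha s\leq\hat{d}(\gamma(t,y))$ then follows from $\hat{d}(\gamma(t,y))\geq\hat{d}(y)-s$ together with $\alpha\leq 1$, since these give $s(1+\alpha)\leq\hat{d}(y)$, i.e.\ $\alpha s\leq\hat{d}(y)-s$.

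Finally, for case 2, I would argue that since $\gamma(\cdot,y)$ starts at its centre $y$ and, by hypothesis, never meets the sphere $\partial B(y,\frac12\hat{d}(y))$, the intermediate value theorem applied to the continuous map $t\mapsto|\gamma(t,y)-y|$ keeps the whole path inside the open ball $B(y,\frac12\hat{d}(y))$; in particular $x_0=\gamma(1,y)$ lies there, i.e.\ $|x_0-y|<\frac12\hat{d}(y)$. Together with $\hat{d}(y)\geq\hat{d}(x_0)-|x_0-y|\geq 15-|x_0-y|$ this forces $\hat{d}(y)>10$, so that $\hat{d}(\gamma(t,y))\geq\hat{d}(y)-\frac12\hat{d}(y)=\frac12\hat{d}(y)>5\geq t=\rho(t,y)$, as required.

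The only genuinely delicate point is the regime $t\leq\tau(y)$ of case 1: there the multiplier $\alpha$ depends on $\gamma(\tau(y),y)$, and one must simultaneously bound it from above and exploit the Lipschitz lower bound on $\hat{d}(\gamma(t,y))$. This is precisely where the normalization $\hat{d}(x_0)\geq 15$ from \eqref{dx015} enters, as it forces $\alpha\leq\frac15<1$; I expect this interplay between the value of $\alpha$ (itself dictated by the continuity of $\rho$ at $\tau(y)$) and the geometric Lipschitz constraints to be the main obstacle, the other two regimes being essentially direct. I would also note that, since $z\in B_0$ gives the \emph{strict} inequality $|z-x_0|<1$ and $\hat{d}(\gamma(t,y))>0$ throughout, the membership in the \emph{open} set $\hat{\Omega}$ is obtained without any boundary case to worry about.
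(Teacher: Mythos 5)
Your proof is correct and follows essentially the same route as the paper's: the same three regimes in the definition of $\rho$, the same $1$-Lipschitz estimates giving $\hat{d}(\gamma(\tau(y),y))\leq\frac32\hat{d}(y)$ and $\alpha\leq\frac15$, and the same argument in case 2 (the path stays in $B(y,\frac12\hat{d}(y))$, forcing $\hat{d}(y)\geq 10$). The only difference is that you reduce the claim to the weaker scalar inequality $\rho(t,y)\leq\hat{d}(\gamma(t,y))$, whereas the paper establishes $\rho(t,y)\leq\frac15\,\hat{d}(\gamma(t,y))$~---~a stronger bound which suffices here for the same reason, and which the paper reuses later (e.g.\ in the proof of Theorem~\ref{main2}).
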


To prove this claim, it is enough to check that one has:
\begin{equation}\label{a}
\rho(t,y)\leq \frac 1{5}\hat{d}(\gamma(t,y)).
\end{equation}
Observe that in case $1$, for $0\leq t \leq \tau(y)$, we have $\left\vert y - \gamma(t,y) \right\vert \leq \frac12 \hat{d}(y)$, which implies $\rho(t,y) \leq \frac{\alpha}{2} \hat{d}(y)$ and, in turn, $\rho(t,y)\leq\alpha \hat{d}(\gamma(t,y))$, for it is clear that we have:
$$
\hat{d}(y)\leq |y-\gamma(t,y)|+\hat{d}(\gamma(t,y))\leq \frac 12 \hat{d}(y)+\hat{d}(\gamma(t,y)),
$$
and hence also $\hat{d}(y)\leq 2 \hat{d}(\gamma(t,y))$.
We then have $\hat{d}(\gamma(\tau(y),y))\leq \hat{d}(y)+\left\vert \gamma(\tau(y),y)-y\right\vert=\frac 32 \hat{d}(y)$, and hence also $\alpha \leq \frac 15$.
By construction this finally yields \eqref{a}, and therefore
Claim~\ref{claim.gamma} in case 1.

In case 2, it is clear that we have $x_0\in B(y,\frac 12 \hat{d}(y))$; in particular this yields $|y-x_0|\leq\frac 12 \hat{d}(y)$ and we hence have:
$$
\hat{d}(y)\geq \hat{d}(x_0)-|y-x_0|\geq 15-\frac 12 \hat{d}(y),
$$
which implies $\frac 32 \hat{d}(y)\geq 15$. But for $0\leq t\leq 1$ we also have $\gamma(t,y)\in B(y,\frac 12 \hat{d}(y))$ so that:
$$
\hat{d}(\gamma(t,y))\geq  \hat{d}(y)-|y-\gamma(t,y)|\geq \hat{d}(y)-\frac 12 \hat{d}(y)=\frac 12 \hat{d}(y)\geq 5 \geq 5\rho(t,y).
$$
This completes the proof of Claim~\ref{claim.gamma}.\\

Fix now $\varphi \in \calD(\R^n)$. Using the fact that $m(\hat{\Omega}\setminus \Omega)=0$ and proceeding exactly as in the proof of \cite[Lemma~2.4]{DMRT}, we compute:
\begin{equation}\label{muphi}
\int_\Omega\varphi\,d\mu=-\int_\Omega\int_\Omega G(x,y)\cdot\nabla\varphi(x)\,dx\,d\mu(y),
\end{equation}
where $G(x,y)$ is defined as follows for $x,y\in \Omega$, $x\neq y$:
\begin{equation}\label{GG}
G(x,y) := \int_0^1 \left[\dot{\gamma}(t,y) + \dot{\rho}(t,y)\frac{x-\gamma(t,y)}{\rho(t,y)}\right] \chi\left(x_0+\frac{x-\gamma(t,y)}{\rho(t,y)}\right) \frac{dt}{\rho(t,y)^n}.
\end{equation}

As the following lemma shows, $G(x,y)$ is well-defined for a.e.\ $x\in\Omega$ and $\mu$-a.e.\ $y\in\Omega$.
\begin{Lemma} \label{Gxy}
For a.e.\ $x\in\Omega$ and $\mu$-a.e.\ $y\in\Omega$, $G(x,y)$ is well defined and one has:
$$
\int_{\Omega} \left\vert G(x,y)\right\vert d\left\vert \mu\right\vert (y)\lesssim \kappa w_0(x).
$$
\end{Lemma}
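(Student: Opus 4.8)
The plan is to estimate the kernel $G(x,y)$ pointwise by splitting the integral defining it along the partition of $[0,1]$ induced by $\tau(y)$, and then to integrate the resulting bound against $d|\mu|(y)$, using $|\mu|\leq\kappa\mu_0$ to reduce everything to $\mu_0$. First I would record the elementary consequences of Claim~\ref{claim.gamma}: since $\chi$ is supported in $\overline{B_0}=\overline{B(x_0,1)}$, the integrand at time $t$ is nonzero only when $x_0+\frac{x-\gamma(t,y)}{\rho(t,y)}\in\overline{B_0}$, i.e.\ $|x-\gamma(t,y)|\leq\rho(t,y)$; combined with \eqref{a}, this forces $|x-\gamma(t,y)|\leq\frac15\hat d(\gamma(t,y))$, which in particular bounds $\hat d(x)$ and $\hat d(\gamma(t,y))$ against each other up to constants (by the Lipschitz property of $\hat d$). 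Writing $\|\chi\|_\infty\lesssim 1$ and $|\dot\rho|\lesssim 1$ a.e.\ (the factor $\frac{x-\gamma(t,y)}{\rho(t,y)}$ being bounded on the support of $\chi$), I would obtain the pointwise bound
$$
|G(x,y)|\lesssim \int_{\{t:\,|x-\gamma(t,y)|\leq\rho(t,y)\}} \bigl(|\dot\gamma(t,y)|+1\bigr)\,\frac{dt}{\rho(t,y)^n}.
$$

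Next I would treat the two regimes of $\rho$ separately. On the ``geodesic'' portion $t>\tau(y)$ (and in case~2), one has $\rho(t,y)\asymp\hat d(\gamma(t,y))\gtrsim\hat d(x)$ on the support, so $\rho(t,y)^{-n}\lesssim\hat d(x)^{-n}$; parametrizing by arclength and using the Ahlfors-type bound \eqref{Ahlfors}, $l\bigl(\gamma_y\cap B(x,r)\bigr)\lesssim r$ with $r\asymp\hat d(x)$, the length of the relevant time-set contributes a factor $\hat d(x)$, and the indicator that $\gamma_y$ meets $B(x,\tfrac12\hat d(x))$ at all is exactly what the definition \eqref{defomega} of $\omega$ records. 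This is designed to produce a contribution of order $\omega$-type, i.e.\ after integrating in $y$ against $\mu_0$ it yields the term $\omega(x)\hat d(x)^{-n+1}$ of $w_0$. On the ``radial'' portion $t\leq\tau(y)$, one has $\rho(t,y)=\alpha|y-\gamma(t,y)|$, so $\rho^{-n}\asymp|y-\gamma(t,y)|^{-n}$ and $|x-\gamma(t,y)|\lesssim|y-\gamma(t,y)|$; estimating the arclength integral here and integrating against $\mu_0(y)$ should reproduce, via the change of variables turning $\int|y-\gamma|^{1-n}$ along the path into a Riesz-potential estimate, the term $I_1\mu_0(x)$ of $w_0$.

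The main obstacle is the careful bookkeeping in the radial regime: one must convert the time integral $\int_0^{\tau(y)}|y-\gamma(t,y)|^{-n}(|\dot\gamma|+\dot\rho\cdot\text{(bounded)})\,dt$, restricted to the support condition, into something controlled by $|x-y|^{1-n}$ so that integrating in $y$ gives $I_1\mu_0(x)$. This requires exploiting that on this portion $\gamma(t,y)$ moves away from $y$ at a controlled rate together with the constraint $|x-\gamma(t,y)|\lesssim|y-\gamma(t,y)|$ to relate $|y-\gamma(t,y)|$ to $|x-y|$, and then using \eqref{Ahlfors} again (or a direct length estimate) to bound the integral by a constant times $|x-y|^{1-n}$. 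I would expect the geodesic regime to follow \cite[Lemma~2.4]{DMRT} almost verbatim, the only genuinely new work being this measure-data radial estimate and the uniform passage from $|\mu|$ to $\mu_0$; throughout, I would apply Fubini/Tonelli to justify interchanging the $t$-integral with the $\mu_0$-integral, and invoke Lemmas~\ref{propI1mu0} and~\ref{omega} to conclude finiteness for a.e.\ $x$.
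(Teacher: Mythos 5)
Your overall architecture~---~split the $t$-integral at $\tau(y)$, bound the radial part by $|x-y|^{-n+1}$ so that integration against $|\mu|\leq\kappa\mu_0$ produces $\kappa I_1\mu_0(x)$, bound the geodesic part via \eqref{Ahlfors} to produce $\kappa\,\omega(x)\hat d(x)^{-n+1}$~---~is exactly the paper's (which follows \cite[Lemma~2.5]{DMRT} for these two pieces). However, your treatment of ``case 2'', i.e.\ of the points $y$ with $\tau(y)=1$, contains a genuine error. For such $y$ one has $\rho(t,y)=t$, while the proof of Claim~\ref{claim.gamma} shows $\hat d(\gamma(t,y))\geq\frac12\hat d(y)\geq 5$ for all $t$; hence $\rho(t,y)\to 0$ as $t\to 0$ whereas $\hat d(\gamma(t,y))$ stays bounded below, so the comparability $\rho(t,y)\asymp\hat d(\gamma(t,y))\gtrsim\hat d(x)$ that you assert ``in case 2'' is false, and with it the bound $\rho(t,y)^{-n}\lesssim\hat d(x)^{-n}$. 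The failure is not cosmetic: your route would yield $\int_{\{\tau(y)=1\}}|G(x,y)|\,d|\mu|(y)\lesssim\kappa\,\omega(x)\hat d(x)^{1-n}$, and this intermediate inequality is wrong in general. Case-2 points are points lying near $x_0$, deep inside the domain, so there $\hat d(x)\asymp 1$ and $\omega(x)\hat d(x)^{1-n}\lesssim\mu_0(\Omega)$; but the kernel for case-2 pairs is genuinely of size $|x-y|^{-n+1}$ near the diagonal, so concentrating $\mu_0$ on a small ball around a point close to $x_0$ makes the left-hand side arbitrarily large. Case-2 contributions must be absorbed into the $I_1\mu_0$ term, not the $\omega$ term, and your ``geodesic regime'' paragraph supplies no argument for that.

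The paper handles this by a separate explicit computation: for $\tau(y)=1$ the path is the segment $\gamma(t,y)=y+t(x_0-y)$, the kernel collapses to $G(x,y)=\int_0^1 \frac{x-y}{t}\,\chi\bigl(y+\frac{x-y}{t}\bigr)\,\frac{dt}{t^n}$, and the support condition on $\chi$ forces $t\geq c|x-y|$ with $c=(1+\diam\Omega)^{-1}$, whence $|G(x,y)|\leq C|x-y|^{-n+1}$ and the case-2 integral is again $\lesssim\kappa I_1\mu_0(x)$. Your proposal needs this (or an equivalent) step. A second, smaller slip: the claim ``$|\dot\rho|\lesssim 1$ a.e.'' is unjustified~---~the paths are merely rectifiable, so their parametrizations need not have bounded speed; what is true is $|\dot\rho|\lesssim|\dot\gamma|$ on both portions of case 1 (and $\dot\rho=1$ in case 2). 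This matters: if you keep the spurious ``$+1$'' in the case-1 geodesic integrand, its contribution is controlled only by the time-measure of the support set, giving a term of order $\hat d(x)^{-n}$, which $w_0$ does \emph{not} dominate; you must use $|\dot\rho|\lesssim|\dot\gamma|$ there so that the whole integrand is controlled by arclength and \eqref{Ahlfors}.
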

\begin{proof}
Note first that the integrand in $G(x,y)$ vanishes unless one has $|x-\gamma(t,y)|<\rho(t,y)$).
We now let $\Omega':=\{y\in\Omega:\tau(y)<1\}$ and, as in the proof of \cite[Lemma~2.5]{DMRT}, we write for $y\in\Omega'$:
$$
G(x,y) = G_1 (x,y) + G_2 (x,y)
$$
with
\begin{multline*}
G_1 (x,y) = \int_0^{\tau(y)} \left[\dot{\gamma}(t,y) - \left(\frac{\dot{\gamma}(t,y) \cdot (y-\gamma(t,y))}{ \left\vert y - \gamma(t,y) \right\vert^2}\right)(x-\gamma(t,y))\right] \\\cdot \chi\left(x_0+\frac{x-\gamma(t,y)}{\alpha \left\vert y - \gamma(t,y) \right\vert}\right) \frac{1}{\alpha^n  \left\vert y - \gamma(t,y) \right\vert^n} dt,
\end{multline*}
and
\begin{multline}\label{G2}
G_2 (x,y) = \int_{\tau(y)}^1 \left[\dot{\gamma}(t,y) + [\dot{\gamma}(t,y) \cdot \nabla \hat{d}(\gamma(t,y))] \frac{x-\gamma(t,y)}{\hat{d}(\gamma(t,y))} \right] \\\cdot\chi\left(x_0+\hat{d}(x_0)\frac{x-\gamma(t,y)}{\hat{d}(\gamma(t,y))}\right) \frac{[\hat{d}(x_0)]^n}{\hat{d}(\gamma(t,y))^n} dt.
\end{multline}

Proceeding as in the proof of \cite[Lemma~2.5]{DMRT}, we get the following estimate for $x,y\in\Omega$:
$$
\left\vert G_1 (x,y) \right\vert \leq C \left\vert x - y \right\vert^{-n+1}.
$$
We hence have that:
\begin{equation} \label{estimG1}
\int_{\Omega'} \left\vert G_1 (x,y) \right\vert d\left\vert\mu\right\vert(y) \leq C \kappa I_1\mu_0(x).
\end{equation}

Following once more the proof of \cite[Lemma~2.5]{DMRT}, we also show that one has:
\begin{equation} \label{estimG2}
\int_{\Omega'} \left\vert G_2(x,y)\right\vert d\left\vert\mu\right\vert(y)\leq C\kappa \omega(x)\hat{d}(x)^{-n+1},
\end{equation}
and gathering \eqref{estimG1} and \eqref{estimG2} yields:
\begin{equation}\label{G}
\int_{\Omega'} |G(x,y)|\,d\left\vert \mu\right\vert (y)\leq C\kappa w_0(x),
\end{equation}
for a.e. $x\in\Omega$.

If we fix now \Bk $y\in\Omega\setminus\Omega'$ (meaning that we are in case 2), we compute $\gamma(t,y)=y+t(x_0-y)$, $\gamma(t,y)+\rho(t,y)(z-x_0)=y+t(z-y)$, $\dot{\gamma}(t,y)=x_0-y$ and $\dot{\rho}(t,y)=1$. We hence get, from (\ref{GG}):
$$
G(x,y)=\int_0^1 \frac{x-y}{t}\cdot \chi\left(y+\frac{x-y}{t}\right)\,\frac{dt}{t^n}.
$$
Yet in order for the integrand in the above integral to be nonzero, we should have $y+\frac{x-y}{t}\in B_0$, implying in particular that one has:
$$
\left|y+\frac{x-y}{t}-x_0\right|<1.
$$
We hence compute:
$$
\left|\frac{x-y}{t}\right|\leq 1+ |x_0-y|\leq 1+\diam \Omega.
$$
Letting $c:=(1+ \diam \Omega)^{-1}$, we get in particular $t\geq c|x-y|$ and hence also:
$$
|G(x,y)|\leq \int_{c|x-y|}^1 \frac{dt}{t^n}\leq \frac{c^{1-n}}{1-n} |x-y|^{-n+1}\leq C |x-y|^{-n+1}.
$$
Integrating over $\Omega\setminus\Omega'$, we get:
\begin{equation} \label{Gbis}
\int_{\Omega\setminus\Omega'} |G(x,y)|\,d |\mu|(y)\leq C\kappa I_1\mu_0(x)\leq C \kappa w_0(x).
\end{equation}
According to \eqref{G} and \eqref{Gbis}, \Bk we have shown that:
\begin{equation}\label{eq.19}
\int_{\Omega} |G(x,y)|\,d|\mu|(y)=\int_{\Omega'} |G(x,y)|\,d|\mu|(y)+\int_{\Omega\setminus\Omega'} |G(x,y)|\,d|\mu|(y)\leq C\kappa w_0(x),
\end{equation}
which concludes the proof of Lemma \ref{Gxy}.
\end{proof}

As in \cite{DMRT}, we define $u$ by
$$
u(x) = \int_\Omega G(x,y) \, d\mu(y),
$$
which is well-defined by Lemma \ref{Gxy}, and we have, for a.e. $x\in\Omega$:
\begin{equation}\label{u}
\left\vert u(x) \right\vert \leq C\kappa w_0(x),
\end{equation}
which is exactly (\ref{estimLinfty}). It then follows from (\ref{muphi}) and Fubini's theorem, which we may apply thanks to Lemmata \ref{omega} and \ref{Gxy}, that we have, for any $\varphi\in\calD(\R^n)$:
\begin{equation}\label{eqinDOmega}
\int_{\Omega} u\cdot\nabla \varphi = - \int_{\Omega} \varphi \, d\mu,
\end{equation}
which is (\ref{eq.form-faible}).
\end{proof}
\begin{Remark}

In the context of the preceding proof, assume moreover that, for some $\epsilon>0$, $\mu_0$ satisfies $\mu_0(B(x,r))\lesssim r^{n-1+\epsilon}$ for all $x\in\Omega$ and all $0<r<\hat{d}(x)$. Then one can compute, for $x\in\Omega$ (calling $C>0$ a constant such that $G_1(x,y)$ vanishes unless one has $|x-y|\leq C\hat{d}(x)$, see \cite[p. 804]{DMRT}):
$$
\int_{|y-x|\leq C \hat{d}(x)} \frac{1}{|x-y|^{n-1}}\, d\mu_0(y)\leq\sum_{k=0}^\infty \int_{2^{-k-1}C\hat{d}(x)< |y-x|\leq 2^{-k}C\hat{d}(x)}  \frac{1}{|x-y|^{n-1}}\, d\mu_0(y).
$$
Yet we have for $k\in\N$:
\begin{multline*}
\int_{2^{-k-1}C\hat{d}(x)< |y-x|\leq 2^{-k}C\hat{d}(x)}  \frac{1}{|x-y|^{n-1}}\, d\mu_0(y)\leq (2^{-k-1}C\hat{d}(x))^{1-n}\mu_0[B(x,2^{-k}C\hat{d}(x)]\\
\lesssim 2^{n-1} C^\epsilon 2^{-k\epsilon} [\hat{d}(x)]^\epsilon.
\end{multline*}
It hence follows that one has:
$$
\int_\Omega G_1(x,y)\,d\mu_0(y)\lesssim C_\epsilon [\hat{d}(x)]^\epsilon,
$$
and that one could hence prove Proposition~\ref{solutioninfty} with a weight of the form $w_0(x)=\omega(x)[\hat{d}(x)]^{1-n}+C_\epsilon [\hat{d}(x)]^\epsilon$.
\end{Remark}

We now examine how (\ref{eqinDOmega}) can, in some cases, be extended to a wider class of test functions~---~hence extending, in some sense, the mild ``boundary condition'' appearing in (\ref{eq.form-faible}) (see Remark~\ref{rmk.init} above).

\section{Extending the boundary condition}\label{sec.bdry}

Let us start by denoting by $\calG$ the space of all locally integrable functions $f$ on $\hat{\Omega}$ having a weak gradient in $\hat{\Omega}$ and such that, for all $\delta>0$, there exists $r>n$ (depending on $\delta$) such that $|\nabla f|\in L^r(\hat{\Omega}_{\delta})$.
Now define a space $\calE$ by:
\begin{equation}\label{def.E}
\calE:=\{f\in \calG:\ f\in L^1(\mu_0)\mbox{ and }|\nabla f|w_0\in L^1(\Omega)\}.
\end{equation}
It will be shown in this section that, under the assumptions of Theorem~\ref{main}, \eqref{eq.g} can be extended to test functions in $\calE$.

\begin{Remark}\label{rmk.E}
Let us make immediately two straightforward observations.
\begin{itemize}
\item[(i)] The integrability condition on $\left\vert \nabla f\right\vert$ in each $\hat{\Omega}_{\delta}$ readily implies that any $f\in\calG$ is bounded and continuous on $\hat{\Omega}_\delta$ for all $\delta>0$; in particular $f$ has to be continuous on $\hat{\Omega}$. Moreover it is clear that $\calG$ contains the space of all locally Lipschitz functions in $\hat{\Omega}$ with bounded Lipschitz constant on $\hat{\Omega}$.
\item[(ii)] The space $\calE$ defined above obviously contains the space of all locally Lipschitz function $f$ on $\hat{\Omega}$ with bounded local Lipschitz constant and satisfying $f\in L^1(\mu_0)$.
\item[(iii)] Finally, observe that both $\calE$ and $\calG$ are vector spaces enjoying the property that for any $f\in \calE$ (resp. $f\in\calG$), one has $f_+,f_-, |f|\in \calE$ (resp. $f_+,f_-, |f|\in \calG$) and $\max(|\nabla f_+|,|\nabla f_-|, |\nabla|f||)\leq |\nabla f|$ a.e. in $\Omega$ (with respect to Lebesgue's measure).
\end{itemize}
\end{Remark}

We now turn to prove the following improvement of our main theorem.
\begin{theorem} \label{main2}
Assume that $\mu_0$ is a nontrivial finite (positive) Radon measure in $\Omega$ satisfying (\ref{dOmega}) and let $w_0$ be the weight defined by (\ref{wmu}), so that it satisfies properties (A) and (B) in Proposition~\ref{solutioninfty}. Given a (signed) Radon measure $\mu$ on $\Omega$ satisfying $\mu(\Omega)=0$ and $|\mu|\leq\kappa\mu_0$ for some $\kappa>0$, let also $u\in L^\infty_{1/w_0}$ be the solution of $\diver v=\mu$ constructed in Proposition~\ref{solutioninfty}, so that it satisfies $|u|\leq C\kappa |w_0|$ a.e.\ on $\Omega$, where $C>0$ is independent of $\kappa$, $\mu$ and $\mu_0$. We then have, for all $g\in\calE$:
\begin{equation}\label{eq.E}
\int_\Omega u\cdot\nabla g=-\int_\Omega g\,d\mu.
\end{equation}
\end{theorem}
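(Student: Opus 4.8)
The plan is to prove \eqref{eq.E} by a density/approximation argument, reducing the general $g\in\calE$ to the case of test functions in $\calD(\R^n)$ for which \eqref{eqinDOmega} already holds. The essential difficulty is that a function $g\in\calE$ need not have compact support inside $\hat\Omega$, need not be smooth, and need not be globally bounded up to the boundary; all we know is that $g$ is continuous on $\hat\Omega$ (by Remark~\ref{rmk.E}(i)), that $g\in L^1(\mu_0)$, and that $|\nabla g|w_0\in L^1(\Omega)$. So the two obstacles to overcome are regularity (passing from $g$ to a smooth object) and the behavior near $\partial\Omega$ (controlling the two sides of \eqref{eq.E} when we truncate or cut off). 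Because $u$ is only controlled pointwise by $w_0$, the natural estimate on the left-hand side is $|\int_\Omega u\cdot\nabla g|\le C\kappa\int_\Omega|\nabla g|w_0$, which is finite exactly by the defining condition on $\calE$; this is the quantity I would use to push all error terms to zero.

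First I would localize to the exhaustion $\{\hat\Omega_\delta\}$. Fix $\delta>0$ and choose a cutoff $\eta_\delta\in\calD(\hat\Omega)$ with $\eta_\delta=1$ on $\hat\Omega_{2\delta}$, $0\le\eta_\delta\le1$, supported in $\hat\Omega_\delta$. On each $\hat\Omega_\delta$ we have $|\nabla g|\in L^r$ for some $r>n$ (the defining property of $\calG$), so a standard mollification $g*\varphi_\epsilon$ converges to $g$ in $W^{1,r}(\hat\Omega_{2\delta})$ and uniformly (by Morrey/Sobolev embedding, since $r>n$), giving a smooth compactly supported approximant $g_{\delta,\epsilon}:=\eta_\delta\,(g*\varphi_\epsilon)\in\calD(\hat\Omega)\subseteq\calD(\R^n)$. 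For each such $g_{\delta,\epsilon}$ identity \eqref{eqinDOmega} applies verbatim, giving $\int_\Omega u\cdot\nabla g_{\delta,\epsilon}=-\int_\Omega g_{\delta,\epsilon}\,d\mu$.

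The main work is then to let $\epsilon\to0$ and $\delta\to0$ and check both sides converge to the desired quantities. For the right-hand side, $g_{\delta,\epsilon}\to\eta_\delta g$ pointwise and $\mu$-boundedly, and then $\eta_\delta g\to g$ as $\delta\to0$: here I would invoke $g\in L^1(\mu_0)\supseteq L^1(|\mu|)$ (since $|\mu|\le\kappa\mu_0$) together with dominated convergence, using that $\mu(\hat\Omega\setminus\Omega)=0$ so the behaviour outside $\Omega$ is irrelevant. For the left-hand side, the mollification limit $\epsilon\to0$ is handled by $L^r$-convergence of gradients on the fixed compact support against $u\in L^{r'}_{\mathrm{loc}}$ (note $u\lesssim w_0$ with $w_0\in L^1$, and more precisely $I_1\mu_0\in L^p$ for $p<n/(n-1)$ by Lemma~\ref{propI1mu0}, which pairs with $r>n$). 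The genuinely delicate term is the cutoff error $\int_\Omega u\cdot(\nabla\eta_\delta)\,g$ as $\delta\to0$: I would bound it by $C\kappa\,\|g\|_{L^\infty(\operatorname{supp}\nabla\eta_\delta)}\int_{\hat\Omega_\delta\setminus\hat\Omega_{2\delta}} w_0|\nabla\eta_\delta|$, and control $\int u\cdot\eta_\delta\nabla g$ by $C\kappa\int_{\Omega}|\nabla g|w_0\to$ its value over all of $\Omega$ by dominated convergence (the domination being $|\nabla g|w_0\in L^1$, the defining condition of $\calE$). The cutoff term is where I expect the fight: one needs $|\nabla\eta_\delta|\lesssim\hat d^{-1}\lesssim\delta^{-1}$ on a shell of width $\sim\delta$ near $\partial\hat\Omega_\delta$, so the factor $w_0|\nabla\eta_\delta|$ over that shell must be shown to vanish, which should follow from $w_0\in L^1$ together with the absolute continuity of the integral $\int_{\hat\Omega\setminus\hat\Omega_{2\delta}}(|\nabla g|+\hat d^{-1}\,\mathbf 1)\,w_0\to0$; the uniform boundedness of $g$ on the shrinking shells coming from continuity of $g$ on $\hat\Omega$ and the convergence of $\hat\Omega_\delta$ to $\hat\Omega$. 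Assembling these three limits yields \eqref{eq.E} for arbitrary $g\in\calE$.
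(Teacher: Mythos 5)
Your overall strategy (approximate $g$ by test functions, invoke \eqref{eqinDOmega}, pass to the limit) is natural, but it breaks down exactly at the point you flag as ``where I expect the fight'': the cutoff error term cannot be controlled with the information available. After the mollification limit, that term is
$$
\int_\Omega u\cdot(\nabla\eta_\delta)\,g,
\qquad\text{bounded only by}\qquad
\frac{C\kappa}{\delta}\int_{\hat\Omega_\delta\setminus\hat\Omega_{2\delta}} w_0\,|g|,
$$
and nothing in the hypotheses makes this go to zero. Your proposed justification has two flaws. First, you invoke ``absolute continuity'' of $\int_{\hat\Omega\setminus\hat\Omega_{2\delta}}\hat d^{-1}w_0$, but $\hat d^{-1}w_0$ need not be integrable at all: $w_0$ is merely in $L^1(\Omega)$ and may concentrate arbitrarily near $\partial\hat\Omega$, so multiplying by the factor $\hat d^{-1}\approx\delta^{-1}$ on the shell destroys integrability; absolute continuity of $\int w_0$ alone cannot beat the $\delta^{-1}$ blow-up (even along a well-chosen subsequence of $\delta$'s, a pigeonhole argument only gives shells where $\int_{\mathrm{shell}}w_0=o(1)$, not $o(\delta)$). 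Second, the ``uniform boundedness of $g$ on the shrinking shells'' is false: $g\in\calE$ is continuous on the \emph{open} set $\hat\Omega$, hence bounded on each fixed shell, but the bounds may blow up as the shells approach $\partial\hat\Omega$ (membership in $L^1(\mu_0)$ gives no pointwise control near the boundary, since $\mu_0$ may not charge that region). This is not a technical nuisance: the identity \eqref{eq.E} encodes a boundary condition, and your argument only ever tests $u$ against functions compactly supported in $\hat\Omega$, so the boundary information must be recovered entirely from the cutoff limit --- which the crude bound $|u|\le C\kappa w_0$ is too weak to do.

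The paper's proof avoids cutting off $g$ altogether by truncating the \emph{measure} instead: it sets $\mu_\epsilon:=\mu\restriction\hat\Omega_\epsilon$ and $u_\epsilon(x):=\int_\Omega G(x,y)\chi_{\hat\Omega_\epsilon\cap\Omega}(y)\,d\mu(y)$, and then exploits a structural property of the Bogovskii kernel that your argument never uses: by property (d) of Lemma~\ref{paths}, the paths issued from points of $\hat\Omega_\epsilon$ stay in $\hat\Omega_\delta$, so $G(\cdot,y)$ is supported in $\hat\Omega_{\frac45\delta}$ for such $y$, and hence $u_\epsilon$ \emph{vanishes identically} outside $\hat\Omega_{\frac23\delta}$. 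Consequently the pairing $\int_\Omega u_\epsilon\cdot\nabla\varphi_k$ (with $\varphi_k=\rho_k*(g\chi_{\hat\Omega_{2^{-k}}})$) takes place entirely in $\hat\Omega_{\frac23\delta}$, where $w_0\in L^{r'}$ (Lemma~\ref{propI1mu0} plus boundedness of $\omega\,\hat d^{1-n}$ there) pairs against the $L^r$-convergence $\rho_k*\nabla g\to\nabla g$; there is no boundary error term at all. The final step $\epsilon\to0$ is then handled by dominated convergence on both sides, using \eqref{eq.19} and $g\in L^1(|\mu|)$. In short: the missing idea in your proposal is this support localization of the solution associated with the truncated measure, which requires working with the kernel representation of $u$ rather than with an arbitrary vector field satisfying $|u|\le C\kappa w_0$.
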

\begin{proof}
Given $\epsilon>0$, define a signed Radon measure $\mu_\epsilon$ on $\Omega$ by $\mu_\epsilon(A):=\mu(A\cap\hat{\Omega}_\epsilon)$ for all $A\subset \Omega$ (that is to say that $\mu_\epsilon$ is the restriction of $\mu$ to $\Omega\cap\hat{\Omega}_\epsilon$). We have in particular $|\mu_\epsilon|\leq|\mu|\leq\kappa \mu_0$, so that, by Proposition~\ref{solutioninfty}, if 
$$
u_{\epsilon}(x):=\int_{\Omega} G(x,y)\chi_{\hat{\Omega}_{\epsilon}\cap \Omega}(y)d\mu(y),
$$
then $|u_\epsilon|\leq C\kappa w_0$ as well as:
$$
\int_\Omega u_\epsilon\cdot\nabla\varphi=-\int_\Omega \varphi\,d\mu_\epsilon,
$$
for all $\varphi\in\calD(\R^n)$.\par
\noindent We shall show in a moment that one has, for any $g\in\calE$:
\begin{equation}\label{eq.gprovi}
\int_\Omega u_\epsilon\cdot\nabla g=-\int_\Omega g\,d\mu_\epsilon.
\end{equation}
Let us first show how the latter equality will imply (\ref{eq.E}). To this purpose, fix $g\in\calE$ and observe, on one hand, that one has, for all $x\in\Omega$:
$$
|u(x)-u_\epsilon(x)|\leq \int_{\Omega} |G(x,y)||1-\chi_{\Omega\cap \hat{\Omega}_\epsilon}(y)|\,d|\mu|(y).
$$
Since $\lim_{\epsilon\rightarrow 0}\chi_{\Omega\cap\hat{\Omega}_\epsilon}(y)=1$, inequality (\ref{eq.19}) and the Lebesgue dominated convergence theorem ensure that $u_\epsilon$ converges a.e. to $u$ when $\epsilon\to 0$. Writing then, a.e.\ on $\Omega$:
$$
|u_\epsilon\cdot\nabla g|\leq C\kappa w_0 |\nabla g|\in L^1(\Omega),
$$
and using the Lebesgue dominated convergence theorem again, we see that:
$$
\int_\Omega u_\epsilon\cdot\nabla g\to \int_\Omega u\cdot\nabla g,
$$
as $\epsilon\to 0$.

\noindent On the other hand, observe using the Lebesgue dominated convergence theorem once more (recall that $g\in L^1(\mu)$ by definition of $\calE$) that one has:
$$
\left|\int_{\Omega} g\,d\mu-\int_{\Omega} g\,d\mu_\epsilon\right|\leq  \int_\Omega |g| |1-\chi_{\Omega_\epsilon\cap\Omega}|\,d|\mu|\to 0,
$$
as $\epsilon\to 0$.

\noindent Combining the last two facts with (\ref{eq.gprovi}) then yields (\ref{eq.E}).\par

We now turn to the proof of (\ref{eq.gprovi}). To that purpose, fix $g\in\calE$, let $(\rho_k)\subseteq\calD(\R^n)$ be an approximate identity satisfying $\supp \rho_k\subseteq B(0,2^{-k})$ for all $k$ and define $$\varphi_k:=\rho_k*(g\chi_k)\in\calD(\R^n),
$$
where $\chi_k:=\chi_{\hat{\Omega}_{2^{-k}}}$ and $g$ is extended by $0$ outside $\Omega$~---~this convolution being well defined on the whole space, smooth since $\rho_k$ is smooth, and having compact support since $\Omega$ is bounded. We hence have for each $k$, according to Proposition~\ref{solutioninfty} :
\begin{equation}\label{eq.sol2}
\int_\Omega u_\epsilon\cdot \nabla\varphi_k=-\int_\Omega \varphi_k\,d\mu_\epsilon.
\end{equation}
Since $g$ is continuous in $\hat{\Omega}$, it is clear, moreover, that $\varphi_k$ converges uniformly to $g$ on $\hat{\Omega}_\epsilon$. It hence follows that one has:
\begin{equation}\label{eq.sol22}
\lim_{k\to\infty}\int_\Omega \varphi_k\,d\mu_\epsilon =\lim_{k\to\infty}\int_{\Omega\cap\hat{\Omega}_\epsilon} \varphi_k\,d\mu=\int_{\Omega\cap\hat{\Omega}_\epsilon} g\,d\mu=\int_\Omega g\,d\mu_\epsilon.
\end{equation}

On the other hand, let $\delta>0$ be associated to $\epsilon$ according to property (d) in Lemma~\ref{paths}. We claim that $u_\epsilon=0$ outside $\Omega\cap\hat{\Omega}_{\frac 23\delta}$. Indeed, if $u_{\varepsilon}(x)\neq 0$ for some $x\in\Omega$, there exists $y\in\Omega\cap \hat{\Omega}_{\varepsilon}$ such that $G(x,y)\neq 0$. Therefore, there exists $t\in [0,1]$ such that $\left\vert x-\gamma(t,y)\right\vert\leq\rho(t,y)\leq \frac 15 \hat{d}(\gamma(t,y))$. This implies that
\begin{eqnarray*}
\hat{d}(x) & \geq & \hat{d}(\gamma(t,y))-\left\vert x-\gamma(t,y)\right\vert\\
& \geq & \hat{d}(\gamma(t,y))-\frac 15 \hat{d}(\gamma(t,y))\geq \frac 45\delta>\frac 23 \delta.
\end{eqnarray*}
This means that one has $x\in\hat{\Omega}_{\frac 23 \delta}$.

Observe now that if $x\in\hat{\Omega}_{\frac 23 \delta}$ and $k\in\N$ satisfying $2^{-k}<\frac 13 \delta$ are given, one gets:
$$
\hat{d}(y)\ge \hat{d}(x)-\left\vert x-y\right\vert\ge \frac 23 \delta-2^{-k}>\frac 13 \delta>2^{-k};
$$
it hence follows that one has $\varphi_k(x)=\rho_k*g(x)$ for all such $x$ and $k$. Since $g$ has a weak gradient in $\hat{\Omega}$, we also have, for the same $x$ and $k$:
\begin{equation}\label{eq.kgrand}
\nabla \varphi_k=\rho_k*\nabla g.
\end{equation}
Using the latter facts, one computes:
$$
\left| \int_\Omega u_\epsilon\cdot \nabla\varphi_k-\int_\Omega u_\epsilon\cdot\nabla g\right|\leq C\kappa  \int_{\hat{\Omega}_{\frac 23\delta}} |\nabla\varphi_k-\nabla g|w_0,
$$
(recall that for the Lebesgue measure, it does not matter if one integrates on $\hat{\Omega}_{\frac 23 \delta}$ or $\Omega\cap\hat{\Omega}_{\frac 23 \delta}$).
Now since one has $g\in\calE$, there exists $r>n$ for which one has $\nabla g\in {L^r(\hat{\Omega}_{\frac 23 \delta})}$. Using H\"older's inequality and (\ref{eq.kgrand}), we have, for $1<r'<\frac{n}{n-1}$ satisfying $\frac 1r + \frac{1}{r'}=1$:
$$
\int_{\hat{\Omega}_{\frac 23\delta}} |\nabla\varphi_k-\nabla g|w_0\leq  \|w_0\|_{L^{r'}\left(\hat{\Omega}_{\frac 23\delta}\right)} \|\nabla g-\rho_k*\nabla g\|_{L^{r}\left(\hat{\Omega}_{\frac 23\delta}\right)}.
$$
Yet using Lemma~\ref{propI1mu0} and the fact that one has, for $x\in\hat{\Omega}_{\frac 23\delta}$:
$$
\omega(x)\hat{d}(x)^{1-n}\leq  \left(\frac 23\delta\right)^{1-n}\mu_0(\Omega),
$$
we see that $\|w_0\|_{L^{r'}(\hat{\Omega}_{\frac 23\delta})}<+\infty$; since the sequence $(\rho_k*\nabla g)$ converges in $L^{r}(\hat{\Omega}_{\frac 23\delta})$ to $\nabla g$, we hence see that one has:
$$
\lim_{k\to\infty} \int_\Omega u_\epsilon\cdot \nabla\varphi_k=\int_\Omega u_\epsilon\cdot\nabla g,
$$
which, combined to (\ref{eq.sol2}) and (\ref{eq.sol22}), finishes the proof of (\ref{eq.gprovi}).
\end{proof}

We now come to prove the equivalence of the solvability of \eqref{eq.pbl} and some versions of Poincar\'e inequalities.

\section{Equivalence between the solvability of (\ref{eq.pbl}) and some Poincar\'e inequalities} \label{Poinc}
This section is devoted to the equivalence between the solvability of (\ref{eq.pbl}) and some versions of Poincar\'e inequalities. Let $w\in L^1(\Omega)$ be a positive weight. We define spaces $\calE$ and $\calG$ as in the beginning of Section \ref{sec.bdry}, using the weight $w$ instead of $w_0$.
\begin{Definition}
\begin{enumerate}
\item Say that $(P_1)$ holds if there exists $C>0$ such that, for all $f\in \calE$,
\begin{equation} \label{poincare1} \tag{$P_1$}
\int_{\Omega} \left\vert f(x)-f_{\Omega}\right\vert d\mu_0\leq C \int_{\Omega} \left\vert \nabla f\right\vert w,
\end{equation}
where $f_{\Omega}:=\frac 1{\mu_0(\Omega)}\int_{\Omega} f(x)d\mu_0(x)$.
\item Say that $(P_1^{\ast})$ holds if there exists $C>0$ such that, for all $f\in {\calG}$ such that $E:=\{f=0\}$ verifies $\mu_0(E)>0$, one has,
\begin{equation} \label{poincare1weak} \tag{$P_1^{\ast}$}
\int_{\Omega} \left\vert f(x)\right\vert d\mu_0\leq C\left(1+\frac{\mu_0(\Omega)}{\mu_0(E)}\right) \int_{\Omega} \left\vert \nabla f\right\vert w,
\end{equation}
where it is understood that the finiteness of the right-hand side of the inequality implies that $f\in L^1(\mu_0)$. 
\end{enumerate}
\end{Definition}
We first observe that, given a weight $w$,  \eqref{poincare1} and \eqref{poincare1weak} are equivalent:
\begin{Proposition}\label{prop.poincare}
Let $w\in L^1(\Omega)$ be a positive weight in $\Omega$. Then:
\begin{enumerate}
\item \eqref{poincare1} and \eqref{poincare1weak} are equivalent,
\item if \eqref{poincare1} or \eqref{poincare1weak} holds, then for all $f\in \calG$ with $\left\vert \nabla f\right\vert w\in L^1(\Omega)$, one has $f\in L^1(\mu_0)$.
\end{enumerate}
\end{Proposition}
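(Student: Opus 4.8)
The plan is to prove the equivalence in part (1) by a single elementary device — replacing the mean $f_\Omega$ by a \emph{median} of $f$ with respect to $\mu_0$ — and then to extract part (2) as a byproduct. Throughout I will use that $\mu_0(\Omega)>0$ (as $\mu_0$ is nontrivial) and recall from Remark~\ref{rmk.E}(iii) that $\calE$ and $\calG$ are stable under $f\mapsto f_+,f_-,|f|$ with $\max(|\nabla f_+|,|\nabla f_-|)\leq|\nabla f|$ a.e., and, being vector spaces containing the constants, also under $f\mapsto f-a$ for $a\in\R$. First I would record the existence of a median: since $\mu_0$ is finite, the non-decreasing right-continuous function $t\mapsto\mu_0(\{f\leq t\})$ crosses the level $\tfrac12\mu_0(\Omega)$ at some $a\in\R$ for which both $\mu_0(\{f\geq a\})\geq\tfrac12\mu_0(\Omega)$ and $\mu_0(\{f\leq a\})\geq\tfrac12\mu_0(\Omega)$. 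The role of $a$ is that $(f-a)_+$ and $(f-a)_-$ lie in $\calG$, have gradients dominated by $|\nabla f|$, and vanish respectively on $\{f\leq a\}$ and $\{f\geq a\}$, each of which carries $\mu_0$-measure at least $\tfrac12\mu_0(\Omega)$.

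Assuming \eqref{poincare1weak}, I would first settle part (2): given any $f\in\calG$ with $|\nabla f|w\in L^1(\Omega)$, apply \eqref{poincare1weak} to $(f-a)_\pm$. The zero sets have $\mu_0$-measure $\geq\tfrac12\mu_0(\Omega)$, so the prefactor $1+\mu_0(\Omega)/\mu_0(E)$ is bounded by $3$ and the right-hand sides are finite; the integrability clause built into \eqref{poincare1weak} then forces $(f-a)_\pm\in L^1(\mu_0)$, whence $f-a=(f-a)_+-(f-a)_-\in L^1(\mu_0)$ and, $a$ being constant and $\mu_0$ finite, $f\in L^1(\mu_0)$. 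To deduce \eqref{poincare1} I take $f\in\calE$, apply the same two estimates, and sum them to get $\int_\Omega|f-a|\,d\mu_0\lesssim\int_\Omega|\nabla f|\,w$; the triangle-inequality fact $\int_\Omega|f-f_\Omega|\,d\mu_0\leq 2\int_\Omega|f-a|\,d\mu_0$ (valid for every constant $a$, because $\mu_0(\Omega)|a-f_\Omega|\leq\int_\Omega|f-a|\,d\mu_0$) then yields \eqref{poincare1} with a constant worsened by a harmless factor.

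For $(P_1)\Rightarrow(P_1^\ast)$ the subtlety — and the step I expect to be the main obstacle — is that \eqref{poincare1} is only available for $f\in\calE$, i.e.\ once one already knows $f\in L^1(\mu_0)$, whereas in \eqref{poincare1weak} integrability is part of the conclusion. I would circumvent this by truncation. Given $f\in\calG$ with $\mu_0(E)>0$, $E=\{f=0\}$, assume $\int_\Omega|\nabla f|\,w<\infty$ (the only nontrivial case) and set $f_N:=\max(\min(f,N),-N)$. Built from $f$ and constants by the operations of Remark~\ref{rmk.E}(iii), each $f_N$ lies in $\calG$, is bounded (hence in $L^1(\mu_0)$, so $f_N\in\calE$), satisfies $|\nabla f_N|\leq|\nabla f|$ a.e., and vanishes on $E$. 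Applying \eqref{poincare1} to $f_N$ and restricting the integral to $E$ bounds the mean by $|(f_N)_\Omega|\leq\tfrac{C}{\mu_0(E)}\int_\Omega|\nabla f|\,w$, whence $\int_\Omega|f_N|\,d\mu_0\leq C\bigl(1+\mu_0(\Omega)/\mu_0(E)\bigr)\int_\Omega|\nabla f|\,w$ uniformly in $N$. Letting $N\to\infty$ and using monotone convergence ($|f_N|\nearrow|f|$) delivers simultaneously $f\in L^1(\mu_0)$ and \eqref{poincare1weak}.

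Finally, part (2) under hypothesis \eqref{poincare1} follows at once from the previous paragraph together with the median argument of the second paragraph applied to \eqref{poincare1weak}. The only places requiring genuine care are the verification that truncations and positive/negative parts genuinely remain in $\calG$ with controlled gradients (supplied by Remark~\ref{rmk.E}(iii)) and the bookkeeping of the limiting argument, which is what lets integrability emerge as a \emph{conclusion} rather than a hypothesis; the remaining estimates are routine.
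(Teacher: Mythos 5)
Your proof is correct, and it takes a partly different route from the paper's. The direction \eqref{poincare1}~$\Rightarrow$~\eqref{poincare1weak} coincides with the paper's argument: both first prove \eqref{poincare1weak} for $f\in\calE$ by averaging over $E=\{f=0\}$ to control $|f_\Omega|$, then pass to general $f\in\calG$ through the truncations $f_N=\max(-N,\min(f,N))\in\calE$ and a uniform bound plus Fatou (you use monotone convergence, an immaterial difference). The substantive divergence is the converse: the paper gives no self-contained proof of \eqref{poincare1weak}~$\Rightarrow$~\eqref{poincare1}, deferring it to \cite[Section 3.2]{DMRT}, whereas your $\mu_0$-median device proves it on the spot: $(f-a)_\pm\in\calG$ vanish on sets of $\mu_0$-measure at least $\tfrac12\mu_0(\Omega)$, so \eqref{poincare1weak} applies to each with prefactor at most $3$, and summing, together with the elementary inequality $\int_\Omega|f-f_\Omega|\,d\mu_0\leq 2\int_\Omega|f-a|\,d\mu_0$, gives \eqref{poincare1}. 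The median likewise replaces the paper's treatment of part (2), which instead uses continuity of $f$ on $\hat{\Omega}$ to find a sublevel set $\{f\leq t_0\}$ of positive $\mu_0$-measure, applies \eqref{poincare1weak} to $(f-t_0)_+$, and handles $f^+$ and $f^-$ separately; that argument suffices for integrability since its $f$-dependent prefactor is harmless there, but your median yields the uniform, $f$-independent constant that the converse implication actually requires, which is what makes your version of the whole proposition self-contained. All closure properties you invoke ($\calG$ stable under truncation, positive/negative parts, and subtraction of constants, with gradient domination) are exactly those recorded in Remark~\ref{rmk.E}, so there is no gap.
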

\begin{proof}
Assume first that \eqref{poincare1} holds. We first check \eqref{poincare1weak} for functions $f\in \calE$ such that the set $E:=\{f=0\}$ verifies $\mu_0(E)>0$. Note that in this case, one gets:
$$
\left\vert f_{\Omega}\right\vert=\frac 1{\mu_0(E)}\int_E \left\vert f-f_{\Omega}\right\vert\, d\mu_0\leq \frac 1{\mu_0(E)}\int_{\Omega} \left\vert f-f_{\Omega}\right\vert\, d\mu_0,
$$
which entails
\begin{equation} \label{poincare1zero}
\int_{\Omega} \left\vert f\right\vert d\mu_0\leq \int_{\Omega} \left\vert f-f_{\Omega}\right\vert d\mu_0+\left\vert f_{\Omega}\right\vert \mu_0(\Omega)\lesssim \left(1+\frac{\mu_0(\Omega)}{\mu_0(E)}\right)\int_{\Omega} \left\vert \nabla f\right\vert w.
\end{equation}
Fix now $f\in \calG$ with $\left\vert \nabla f\right\vert w\in L^1(\Omega)$, let $E:=\{f=0\}$ and assume one has $\mu_0(E)>0$. For all $N\geq 1$, define $f_N=\max\left(-N,\min(f,N)\right)$, which still belongs to $\calG$ with $\left\vert \nabla f_N\right\vert\leq \left\vert \nabla f\right\vert$ almost everywhere in $\Omega$ (for the Lebesgue measure); we hence get $f_N\in\calE$. Since $\mu_0\left(\left\{f_N=0\right\}\right)\geq \mu_0(E)>0$, \eqref{poincare1weak} applied to $f_N\in\calE$ shows that
$$
\int_{\Omega} \left\vert f_N(x)\right\vert d\mu_0\lesssim \left(1+\frac{\mu_0(\Omega)}{\mu_0(E)}\right) \int_{\Omega} \left\vert \nabla f_N\right\vert w\leq \left(1+\frac{\mu_0(\Omega)}{\mu_0(E)}\right)\int_{\Omega} \left\vert \nabla f\right\vert w,
$$
and since $f_N(x)\rightarrow f(x)$ for all $x\in \Omega$, the Fatou lemma proves that $f\in L^1(\mu_0)$, and \eqref{poincare1weak} holds.\par
\noindent Assume now that \eqref{poincare1weak} holds for all $f\in \calG$ with $\mu_0\left(\left\{f=0\right\}\right)>0$. That \eqref{poincare1} holds for all functions $f\in \calE$ can be proved as in \cite[Section 3.2]{DMRT}. \par
\noindent Assume finally that \eqref{poincare1weak} holds, and let $f\in \calG$ be nonnegative with $\left\vert \nabla f\right\vert w\in L^1(\Omega)$. Since $f$ is continuous in $\hat{\Omega}$, there exists $t_0\geq 0$ such that $\mu_0\left(\left\{f\leq t_0\right\}\right)>0$. Define now $\tilde{f}=(f-t_0)_+$. It is plain to see that $f\in L^1(\mu_0)$ if and only if $\tilde{f}\in L^1(\mu_0)$. Since $\tilde{f}\in \calG$, $\left\vert \nabla\tilde{f}\right\vert\leq \left\vert \nabla f\right\vert$ and $\mu_0\left(\left\{\tilde{f}=0\right\}\right)>0$, \eqref{poincare1weak} applied to $\tilde{f}$ shows that $\tilde{f}\in L^1(\mu_0)$, so that the same is true for $f$. In the general case, apply this conclusion to $f^+$ and $f^-$. 
\end{proof}
\begin{Remark}\label{rmq.lip-poinc}
It follows from the preceding proof that having inequality \eqref{poincare1} for all locally Lipschitz functions on $\hat{\Omega}$ with bounded Lipschitz constants in $\Omega$ and belonging to $L^1(\mu_0)$, is equivalent to \eqref{poincare1weak} holding for locally Lipschitz functions in $\hat{\Omega}$ whose local Lipschitz constant is bounded in $\hat{\Omega}$.\end{Remark}

The following statement precises somewhat the statement of Theorem~\ref{equivpoincare} given in the introduction. We keep the notations of the previous section.
\begin{theorem} \label{variousequiv}
Let $\Omega$ and $\mu_0$ be as in the statement of Theorem \ref{main}, and define $d_{\hat{\Omega}}$ as before. The following conditions are equivalent:
\begin{enumerate}
\item[(a)] $d_{\hat{\Omega}}\in L^1(\mu_0)$,
\item[(b)] there exists a weight $w\in L^1(\Omega)$, $w>0$ a.e.\ satisfying the conclusions of Theorem~\ref{main},
\item[(c)] there exists a weight $w\in L^1(\Omega)$, $w>0$ a.e.\ yielding either \eqref{poincare1} or \eqref{poincare1weak}.
\end{enumerate}
\end{theorem}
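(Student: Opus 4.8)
The plan is to establish the cycle of implications $(a)\Rightarrow(b)\Rightarrow(c)\Rightarrow(a)$, using Proposition~\ref{prop.poincare} to treat \eqref{poincare1} and \eqref{poincare1weak} interchangeably in~$(c)$. The implication $(a)\Rightarrow(b)$ is essentially already done: assuming \eqref{dOmega}, the weight $w_0$ defined by \eqref{wmu} belongs to $L^1(\Omega)$, is positive almost everywhere, and solves the divergence equation with the required bound by Proposition~\ref{solutioninfty}; hence it satisfies the conclusions of Theorem~\ref{main}.

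For $(b)\Rightarrow(c)$ I would argue by duality, keeping the very weight $w$ furnished by~$(b)$. Fix a test function $f\in\calE$, set $g:=\operatorname{sign}(f-f_\Omega)$ and $\bar{g}:=\mu_0(\Omega)^{-1}\int_\Omega g\,d\mu_0$, and consider the signed measure $\mu:=(g-\bar{g})\mu_0$, which satisfies $\mu(\Omega)=0$ and $|\mu|\le 2\mu_0$. Solving $\diver u=\mu$ with $|u|\le 2Cw$ by~$(b)$ and pairing $u$ against $\nabla f$, one obtains
$$
\int_\Omega |f-f_\Omega|\,d\mu_0=\int_\Omega f\,d\mu=-\int_\Omega u\cdot\nabla f\le 2C\int_\Omega w\,|\nabla f|,
$$
which is exactly \eqref{poincare1} with constant $2C$ (the first equality uses that $\int_\Omega(g-\bar g)\,d\mu_0=0$, so one may replace $f$ by $f-f_\Omega$). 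The identity $\int_\Omega u\cdot\nabla f=-\int_\Omega f\,d\mu$ is granted by~$(b)$ only for $f\in\calD(\R^n)$, so the real work in this step is to upgrade the weak divergence formulation from $\calD(\R^n)$ to the whole class $\calE$. I would carry this out exactly along the lines of Theorem~\ref{main2} (mollification, the local integrability of $\nabla f$ on each $\hat{\Omega}_\delta$, and $w\in L^1$), or, alternatively, first establish \eqref{poincare1} only for locally Lipschitz functions with bounded Lipschitz constant lying in $L^1(\mu_0)$ and then invoke Remark~\ref{rmq.lip-poinc} together with Proposition~\ref{prop.poincare} to recover it on all of $\calE$. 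This passage from the mild boundary condition to the full test class is where I expect the main difficulty to lie.

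For $(c)\Rightarrow(a)$ I would test \eqref{poincare1} against truncations of the geodesic distance. Since $d_{\hat{\Omega}}$ is locally Lipschitz on $\hat{\Omega}$ with $|\nabla d_{\hat{\Omega}}|\le 1$ almost everywhere, each $f_N:=\min(d_{\hat{\Omega}},N)$ lies in $\calE$ (it is bounded, hence in $L^1(\mu_0)$, with $|\nabla f_N|\le 1$), and \eqref{poincare1} gives
$$
\int_\Omega |f_N-(f_N)_\Omega|\,d\mu_0\le C\int_\Omega |\nabla f_N|\,w\le C\int_\Omega w=:K<+\infty.
$$
Choosing $M$ large enough that $A:=\{d_{\hat{\Omega}}\le M\}$ has $\mu_0(A)>0$ (possible since these sets increase to $\Omega$) and noting that $f_N\le M$ on $A$, one gets either $(f_N)_\Omega\le M$ directly or $\mu_0(A)\bigl((f_N)_\Omega-M\bigr)\le K$; in both cases $(f_N)_\Omega\le M+K/\mu_0(A)$ uniformly in $N$. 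Therefore $\int_\Omega f_N\,d\mu_0=(f_N)_\Omega\,\mu_0(\Omega)$ stays bounded, and the monotone convergence theorem applied to $f_N\uparrow d_{\hat{\Omega}}$ yields $d_{\hat{\Omega}}\in L^1(\mu_0)$, that is~\eqref{dOmega}.

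The steps $(a)\Rightarrow(b)$ and $(c)\Rightarrow(a)$ are essentially bookkeeping on top of the constructions already carried out; the genuine obstacle is the duality step $(b)\Rightarrow(c)$, and specifically the need to extend the divergence identity to test functions in $\calE$ before the choice $g=\operatorname{sign}(f-f_\Omega)$ can be legitimately plugged in.
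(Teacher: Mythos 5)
Your overall architecture coincides with the paper's: the cycle $(a)\Rightarrow(b)\Rightarrow(c)\Rightarrow(a)$, with $(a)\Rightarrow(b)$ read off from Proposition~\ref{solutioninfty}, $(b)\Rightarrow(c)$ by duality, and $(c)\Rightarrow(a)$ by testing a Poincar\'e inequality against the geodesic distance. Your $(c)\Rightarrow(a)$ is a correct variant of the paper's: where the paper applies \eqref{poincare1weak} to $f:=(d_{\hat{\Omega}}'-r_0)_+$ (whose formulation builds in that finiteness of the right-hand side forces $f\in L^1(\mu_0)$), you apply \eqref{poincare1} to the truncations $\min(d_{\hat{\Omega}},N)$, bound the averages $(f_N)_\Omega$ uniformly, and conclude by monotone convergence; both work.

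The genuine gap is in $(b)\Rightarrow(c)$, exactly at the point you single out as the crux, and your proposed repair cannot be carried out. Condition $(b)$ supplies, for each admissible $\mu$, \emph{some} $u$ with $|u|\leq C\kappa w$ satisfying \eqref{eq.g} only for $\varphi\in\calD(\R^n)$, and you propose to upgrade this identity to all of $\calE$ ``along the lines of Theorem~\ref{main2}'' (mollification, local integrability of $\nabla f$, $w\in L^1$). But the proof of Theorem~\ref{main2} is not a soft approximation argument applicable to a given solution: it works with the Bogovskii representation itself, through the truncated fields $u_\epsilon(x)=\int_\Omega G(x,y)\chi_{\hat{\Omega}_\epsilon\cap\Omega}(y)\,d\mu(y)$, and it is the support property $u_\epsilon\equiv 0$ off $\hat{\Omega}_{\frac23\delta}$ (a consequence of Lemma~\ref{paths}(d)) that neutralizes the sharp cutoffs $\chi_{\hat{\Omega}_{2^{-k}}}$; it also uses that $w_0\in L^{r'}$ on interior regions, which follows from the explicit formula \eqref{wmu} and Lemma~\ref{propI1mu0}, not from integrability of the weight alone. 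None of this structure exists for the abstract $u$ furnished by $(b)$, and in fact no argument can close your step, because the implication ``\eqref{eq.g} for $\calD(\R^n)$ implies \eqref{eq.E} for $\calE$'' is false for general solutions. Concretely, choose the measurable cover $\tilde{\Omega}=\Omega$ for the slit domain of Example~\ref{ex.carre} (so $\hat{\Omega}=\Omega$), take $\chi\in\calD((0,1)^2)$ with $\chi=1$ at the free endpoint of a slit $L_k$, and set $u:=(-\partial_2\chi,\partial_1\chi)$. Then $u$ is bounded and divergence-free as a distribution on $\R^2$, so it satisfies \eqref{eq.g} with $\mu=0$ for every $\varphi\in\calD(\R^n)$; but if $h\in\calE$ equals $1$ above $L_k$, $0$ below it, and interpolates angularly around the tip of $L_k$, then splitting $\Omega$ along $\{y=h_k\}$ and integrating by parts on each side gives
$$
\int_\Omega u\cdot\nabla h=-\int_{L_k}(h^+-h^-)\,u\cdot e_2\,d\calH^1=-\int_{L_k}\partial_1\chi\,d\calH^1=-1\neq 0=-\int_\Omega h\,d\mu.
$$
A variant of $h$ whose jump at distance $r$ from the tip is $\min(Lr,1)$ has bounded local Lipschitz constant, so the same obstruction defeats your fallback through locally Lipschitz test functions, Remark~\ref{rmq.lip-poinc} and Proposition~\ref{prop.poincare}. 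Worse, under your literal reading of $(b)$ the implication $(b)\Rightarrow(c)$ is itself false: on the same slit domain with $\sum_k k(h_k-h_{k+1})=+\infty$, solving the divergence equation on the full square $(0,1)^2$ and restricting to $\Omega$ shows that $(b)$ holds with $w\equiv1$, while $(a)$, hence $(c)$, fails. This is precisely why the paper's proof never attempts your upgrade: it invokes Theorem~\ref{main2}, i.e.\ it runs the duality argument with the \emph{constructed} Bogovskii solution, for which the $\calE$-identity is a theorem~---~in effect reading $(b)$ as providing solutions that already satisfy \eqref{eq.E} (the weighted test class of \cite{DMRT}). Any complete write-up of this step must do the same (or strengthen the formulation of $(b)$ accordingly); as proposed, your reduction leaves the implication unproven.
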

\begin{Remark}\label{rmk.poinc-2} As the proof (combined to Remark~\ref{rmq.lip-poinc}) will show, all these statements are also equivalent to the following ones:
\begin{itemize}
\item[(c')] there exists a weight $w\in L^1(\Omega)$, $w>0$ a.e.\ yielding \eqref{poincare1} for all bounded locally Lipschitz functions in $\hat{\Omega}$ whose local Lipschitz constant is bounded in $\hat{\Omega}$;
\item[(c'')] there exists a weight $w\in L^1(\Omega)$, $w>0$ a.e.\ yielding \eqref{poincare1weak} for all locally Lipschitz functions in $\hat{\Omega}$ whose local Lipschitz constant is bounded in $\hat{\Omega}$.
\end{itemize}
\end{Remark}
\begin{proof}
That $(a)$ implies $(b)$ was established in Theorem \ref{main}, since one can take $w=w_0$ where $w_0$ is defined in (\ref{wmu}). Assume now that $(b)$ holds and pick up $g\in L^{\infty}(\Omega,\mu_0)$ with $\left\Vert g\right\Vert_{\infty}\leq 1$. By $(b)$ and Theorem \ref{main2}, there exists a vector-valued  function $u$ in $\Omega$ satisfying the following conditions:
\begin{itemize}
\item[(i)] $\int_{\Omega} u\cdot \nabla h=-\int_{\Omega} (g-g_{\Omega})h\,d\mu_0$ for all $h\in\calE$;
\item[(ii)] $\left\Vert \frac u{w}\right\Vert_{\infty} \lesssim 1$.
\end{itemize}
It follows that for any $f\in\calE$, we have:
$$
\left\vert \int_{\Omega} (f-f_{\Omega})gd\mu_0 \Bk\right\vert=\left\vert \int_{\Omega} (f-f_{\Omega})(g-g_{\Omega})d\mu_0\right\vert =\left\vert \int_{\Omega} u\cdot \nabla f\right\vert 
\lesssim \int_{\Omega} \left\vert \nabla f\right\vert \,w,
$$
which yields \eqref{poincare1}, and hence also \eqref{poincare1weak} by Proposition~\ref{prop.poincare}.

\noindent Assume now that (c) is fulfilled. Since $\mu_0(\Omega)>0$, there exist $y_0\in \Omega$ and $r_0>0$ such that $B(y_0,r_0)\subset \Omega$ and $\mu_0(B(y_0,r_0))>0$. Denoting by ${d}_{\hat{\Omega}}'$ the geodesic distance to $y_0$ in $\hat{\Omega}$, we observe that $d_{\hat{\Omega}}'$ is locally Lipschitz on $\hat{\Omega}$ with local Lipschitz constant less than $1$, meaning in particular that one has $d_{\hat{\Omega}}'\in\calG$. As a consequence of ($P_1^*$) applied to $f:=(d_{\hat{\Omega}}'-r_0)_+$, \Bk we then get:
$$
\int_{\Omega} f\,d\mu_0\lesssim \int_{\Omega}\left\vert \nabla f\right\vert w\leq \int_{\Omega} w<+\infty,
$$
which yields the integrability of $d^{\prime}_{\hat{\Omega}}$ with respect to $\mu_0$, hence \Bk (a) since condition \eqref{dOmega} is independent of the choice of $x_0$. \end{proof}
\begin{Remark}
Observe that, as indicated in Remark~\ref{rmk.poinc-2}, the proof of the fact that (c) implies (a) has only used \eqref{poincare1weak} for the function $(d'_{\hat{\Omega}}-r_0)_+$, which is locally Lipschitz in $\hat{\Omega}$ and has a local Lipschitz constant bounded by $1$ on $\hat{\Omega}$.
\end{Remark}

\begin{Remark}
Assume that the weight $w\in L^1(\Omega)$, $w>0$ a.e.\ yields a Poincar\'e inequality \eqref{poincare1}. It then follows from the previous proposition that one has $d_{\hat{\Omega}}\in L^1(\Omega)$, and hence that there exists \emph{a (perhaps different)} weight $\tilde{w}\in L^1(\Omega)$, $\tilde{w}>0$ a.e.\ (one can take $\tilde{w}=w_0$ as in \eqref{wmu}) yielding the solvability, for any (signed) Radon measure $\mu$ in $\Omega$ satisfying $\mu(\Omega)=0$ and $|\mu|\leq \kappa\mu_0$, of problem (\ref{eq.pbl}) by some vector field $u$ satisfying $\|\frac u{\tilde{w}} \|_\infty\leq C\kappa$. As it follows from the following abstract reasoning, one can in fact take $\tilde{w}=w$.

Suppose indeed that $w\in L^1(\Omega)$, $w>0$ a.e.\ yields \eqref{poincare1}. Fix $\kappa>0$ and $\mu$ be a (signed) Radon measure in $\Omega$ satisfying $\mu(\Omega)=0$ and $|\mu|\leq \kappa \mu_0$.

Introduce the spaces $L^1_w(\Omega,\R^n)$, consisting of all measurable vector fields $u$ satisfying $|u|w\in L^1(\Omega)$ (endowed with $\|u\|_{L^1_w}:=\|uw\|_1$) and $L^\infty_{1/w}(\Omega,\R^n)$, consisting of all measurable vector fields $u$ satisfying $\frac{|u|}{w}\in L^\infty(\Omega)$ (endowed with $\|u\|_{L^\infty_{1/w}(\Omega)}:=\left\|\frac{u}{w}\right\|_\infty$).
We also introduce the auxiliary space:
$$
\calF:=\left\{ v\in L^1_{w}(\Omega,\R^n): \text{there exists }g\in\calE\text{ with }v=\nabla g \text{ a.e.\ in }\Omega\right\},
$$
which is a subspace of $L^1_{w}(\Omega,\R^n)$. Define for all  $v\in\calF$:
$$
T(v):=-\int_\Omega g\,d\mu,
$$
if $v=\nabla g$ a.e.\ on $\Omega$, with $g\in\calE$ and observe that this is unambiguous due to the fact that if $h\in \calE$ verifies $\nabla h=0$ a.e., then
it is constant on $\Omega$, so that one has $\int_\Omega h\,d\mu=0$ (recall that $\mu(\Omega)=0$).

Using \eqref{poincare1}, we compute for $v\in \calF$ and  $g\in\calE$ satisfying $v=\nabla g$ a.e.\ on $\Omega$:
$$
|T(v)|=\left|\int_{\Omega} (g-g_\Omega)\,d\mu\right|\leq \kappa\int_\Omega |g-g_\Omega|\,d\mu_0\lesssim \kappa\int_\Omega |\nabla g| w=\kappa\|v\|_{L^1_{w}(\Omega)}.
$$
Hence by the Hahn-Banach theorem $T$ extends to a bounded linear operator on $L^1_{w}(\Omega,\R^n)$. There thus exists $u\in L^\infty_{1/w}(\Omega,\R^n)$ verifying $\|u\|_{L^\infty_{1/w}(\Omega)}=\|T\|\lesssim \kappa$ such that one has $T(v)=\int_\Omega u\cdot v$ for all $v\in L^1_{w}(\Omega,\R^n)$. This implies, for any $g\in\calE$:
$$
\int_\Omega u\cdot \nabla g= T(\nabla g)=-\int_\Omega g\,d\mu,
$$
and we hence see that the weight $w$ allows the existence of a solution $u\in L^\infty_{1/w}(\Omega)$ to the equation $\diver v=\mu$ satisfying the required estimate.
\end{Remark}

\end{document}